\newtheorem{theorem}{Theorem}
\newtheorem{lemma}{Lemma}
\newtheorem{false statement}{False statement}
\theoremstyle{definition}
\newtheorem{claim}{Claim}
\newtheorem{remark}[claim]{Remark}
\newcounter{mathitem}
  {\begin{list}{{$(\roman{mathitem})$}}{
   \setcounter{mathitem}{0}
   \usecounter{mathitem}
   \setlength{\topsep}{0pt plus 2pt minus 0pt}
   \setlength{\parskip}{0pt plus 2pt minus 0pt}
   \setlength{\partopsep}{0pt plus 2pt minus 0pt}
   \setlength{\parsep}{0pt plus 2pt minus 0pt}
   \setlength{\leftmargin}{35pt}
   \setlength{\itemsep}{0pt plus 2pt minus 0pt}}}
  {\end{list}}
\date{\dateline{April 12, 2013}{}\\
\small Mathematics Subject Classifications: 05C38, 05C15, 05C20}
\begin{document}

\title{\bf\Large On some papers of Nikiforov\thanks{supported by NSFC (No.~11271300) and the Doctorate Foundation of Northwestern Polytechnical University (cx201326).}}

\date{}

\author{Bo Ning\thanks{E-mail address: ningbo\_math84@mail.nwpu.edu.cn (B. Ning).}\\
\small Department of Applied Mathematics, School of Science,\\
\small  Northwestern Polytechnical University,\\
\small Xi'an, Shaanxi 710072, P.R.~China}
\maketitle
\begin{abstract}
The well known Mantel's Theorem states that a graph on $n$ vertices and $m$ edges contains a triangle if $m>\frac{n^2}{4}$. Nosal proved that every graph on $m$ edges contains a triangle if the spectral radius $\lambda_1>\sqrt{m}$, which is a spectral analog of Mantel's Theorem. Furthermore, by using Motzkin-Straus Inequality, Nikiforov sharped Nosal's result and characterized the extremal graphs when the equality holds. Our first contribution in this note is to give two new proofs of the spectral concise Mantel's Theorem due to Nikiforov (without help of Motzkin-Straus Inequality). Nikiforov also obtained some results concerning the existence of consecutive cycles and spectral radius. Second, we prove a theorem concerning the existence of consecutive even cycles and spectral radius, which slightly improves a result of Nikiforov. At last, we focus on spectral radius inequalities. Hong proved his famous bound for spectral radius. Later, Hong, Shu and Fang generalized Hong's bound to connected graphs with given minimum degree. By using quite different technique, Nikiforov proved Hong et al.'s bound for general graphs independently. In this note, we prove a new spectral inequality by applying the technique of Nikiforov. Our result extends Stanley's spectral inequality.

\medskip
\noindent {\bf Keywords:} Triangles, Mantel's Theorem, Spectral radius, Consecutive cycles, Consecutive even cycles, Stanley's spectral inequality

\end{abstract}
\medskip
\section{Introduction}
Throughout this note, we only consider graphs which are simple, undirected and finite. We refer the reader to \cite{BM} for terminology and notation not defined here.

Let $G$ be a graph on $n$ vertices, $V(G)$ and $E(G)$ denote the \emph{vertex set} and  \emph{edge set} of $G$, respectively. For a vertex $v\in V(G)$, the neighbor set of $v$ in $G$, denoted $N(v)$, is the set of vertices which are neighbors of $v$ in $G$. We denote $N[v]=N(v)\cup \{v\}$. Let $\delta$ and $d$ be the \emph{minimum degree} and \emph{average degree} of $G$, respectively. The \emph{adjacency matrix} of $G$, denoted by $A(G)$ (or $A$ for simple), is a matrix $A$ such that the $ij$-entry $a_{ij}=1$ if $v_i$ is adjacent to $v_j$ and $a_{ij}=0$ otherwise. Since $A$ is a symmetric $(0,1)$-matrix, all the eigenvalues of $A$ are real. The \emph{eigenvalues} of a graph $G$ are all the eigenvalues of its adjacency matrix. Let $\lambda_1\geq \lambda_2\geq\ldots \geq\lambda_n$ be all the eigenvalues of $G$. In the following, $\lambda_1$ is always called the \emph{spectral radius} of $G$.

The main results of this note are closely related to three past papers due to Nikiforov \cite{Ni1,Ni2,Ni3}. We first give two new proofs of a previous theorem on the existence of triangles and spectral radius of graphs due to Nikiforov. Second, we give a new result concerning consecutive even cycles and spectral radius, which slightly improves a result of Nikiforov. At last, by applying a previous technique of Nikiforov, we give a new spectral radius inequality which extends Stanley's spectral inequality.

\section{Spectral radius}
\subsection{Triangles and spectral radius}
In extremal graph theory, Mantel's Theorem may be a fundamental one. It is a starting point of the famous Tur\'{a}n's Theorem and has many structural proofs. We refer the reader to \cite{C}, where three brief and beautiful ones can be found.

\begin{theorem}
Let $G$ be a graph on $n$ vertices and $m$ edges. If $m>\frac{n^2}{4}$, then $G$ contains a triangle.
\end{theorem}

In 1970, Nosal \cite{No} proved an analogue of Mantel's Theorem in spectral graph theory.

\begin{theorem}[Nosal \cite{No}]
Let $G$ be a graph on $m$ edges and $\lambda_1$ be the spectral radius of $G$. If $\lambda_1>\sqrt{m}$, then $G$ contains a triangle.
\end{theorem}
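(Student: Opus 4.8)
The plan is to prove the contrapositive: if $G$ is triangle-free then $\lambda_1\le\sqrt m$. I would work entirely with the Perron eigenvector and exploit the single structural fact that in a triangle-free graph every neighborhood is an independent set. Let $\mathbf{x}=(x_1,\dots,x_n)$ be a nonnegative eigenvector associated with $\lambda_1$, and pick a vertex $v$ at which $x_v$ is maximum. Applying the eigenvalue equation $\lambda_1 x_w=\sum_{u\sim w}x_u$ twice gives $\lambda_1^2 x_v=\sum_{w}p_2(v,w)\,x_w$, where $p_2(v,w)$ counts walks of length two from $v$ to $w$; this is the identity that converts the spectral quantity into purely combinatorial data.

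Next I would split off the diagonal term. The closed walks $v\to u\to v$ number exactly $d(v)$, contributing $d(v)x_v$, while for $w\ne v$ the coefficient $p_2(v,w)$ is the codegree (number of common neighbors) of $v$ and $w$. Here triangle-freeness enters the first time: if $w\sim v$ then $v$ and $w$ can share no common neighbor, so $p_2(v,w)=0$ for every neighbor $w$ of $v$. Using the maximality of $x_v$ to bound every $x_w\le x_v$ and cancelling $x_v>0$, I obtain $\lambda_1^2\le d(v)+\sum_{w\ne v}\mathrm{codeg}(v,w)$.

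The combinatorial heart is the identity $\sum_{w\ne v}\mathrm{codeg}(v,w)=\sum_{u\sim v}(d(u)-1)$, which follows by counting, for each neighbor $u$ of $v$, its $d(u)-1$ neighbors other than $v$. This collapses the bound to $\lambda_1^2\le\sum_{u\sim v}d(u)$, and now triangle-freeness is used decisively a second time: since $N(v)$ is independent, no edge of $G$ has both endpoints in $N(v)$, so the sum $\sum_{u\sim v}d(u)$ counts each edge incident to $N(v)$ exactly once, giving $\sum_{u\sim v}d(u)\le m$. Chaining the inequalities yields $\lambda_1^2\le m$, as required.

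I expect the main obstacle to be conceptual rather than computational, namely recognizing that bounding $\lambda_1^2$ (rather than $\lambda_1$ directly) through the length-two walk matrix is the productive move, and that triangle-freeness must be invoked twice — once to annihilate the codegrees of adjacent pairs, and once to interpret $\sum_{u\sim v}d(u)$ as a count of distinct edges. As an alternative route I would keep in mind the spectral-moment proof: for a triangle-free graph $\sum_i\lambda_i^3=0$ and $\sum_i\lambda_i^2=2m$, whence $\lambda_1^3=-\sum_{i\ge2}\lambda_i^3\le\lambda_1\sum_{i\ge2}\lambda_i^2=\lambda_1(2m-\lambda_1^2)$, again forcing $\lambda_1^2\le m$; this is shorter but less amenable to extracting the extremal cases.
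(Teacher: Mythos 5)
Your proof is correct, and it is worth noting that the paper never proves Nosal's theorem directly (it cites Nosal and instead proves the sharper Theorem 4, with equality characterization, in two ways). Your main argument is in essence the paper's second proof, but made self-contained: where the paper invokes Lemma 3, the Favaron--Mah\'eo--Sacl\'e bound $\lambda_1\le\sqrt{\max_v\sum_{u\in N(v)}d(u)}$, as a black box, you re-derive exactly that inequality from first principles via the Perron eigenvector and the length-two walk count, and then apply the same combinatorial step (triangle-freeness makes $N(v)$ independent, so $\sum_{u\in N(v)}d(u)=e(N(v),V\setminus N(v))\le m$). What your version buys is elementarity and self-containment, which suffices for the strict inequality of Nosal; what the paper's citation of Lemma 3 buys is the equality case (regular or bipartite semi-regular), which is what powers the characterization of complete bipartite extremal graphs in Theorem 4 and is unavailable from your maximal-entry argument without extra work. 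One small expository correction: triangle-freeness is used only once in your chain, not twice. The passage from $\lambda_1^2\le d(v)+\sum_{w\ne v}\mathrm{codeg}(v,w)$ through the identity $\sum_{w\ne v}\mathrm{codeg}(v,w)=\sum_{u\in N(v)}(d(u)-1)$ to $\lambda_1^2\le\sum_{u\in N(v)}d(u)$ is valid in every graph; the observation that $\mathrm{codeg}(v,w)=0$ for $w\sim v$ is never actually needed, since the only decisive use of triangle-freeness is the final bound $\sum_{u\in N(v)}d(u)\le m$. Your fallback spectral-moment argument via $\sum_i\lambda_i^3=0$ and $\sum_i\lambda_i^2=2m$ is precisely the paper's first proof of Theorem 4, so you have in effect reconstructed both of the paper's routes.
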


Proving a conjecture of Edwards and Elphick \cite{EE}, Nikiforov \cite{Ni1} obtained a spectral concise Tur\'{a}n's Theorem which generalizes Nosal's theorem \cite{No}. One important technique in Nikiforov's proof is to use Motzkin-Straus Inequality \cite{MS}, and its power and close connection with extremal problems in spectral graph theory was firstly noticed by Wilf \cite{W}. In \cite{Ni3}, Nikiforov characterized all the extremal cases.

We list the part (i) of \cite[Theorem 2]{Ni3} as follows.

\begin{theorem}[Nikiforov \cite{Ni3}]
Let $G$ be a graph on $m$ edges and $\lambda_1$ be the spectral radius of $G$. If $\lambda_1\geq \sqrt{m}$, then $G$ contains a triangle unless $G$ is a complete bipartite graph with possible some isolated vertices.
\end{theorem}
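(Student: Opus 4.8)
The plan is to establish a sharper, self-contained statement from which the theorem is immediate: \emph{if $G$ is triangle-free then $\lambda_1\le\sqrt m$, and equality holds precisely when $G$ is a complete bipartite graph together with (possibly) isolated vertices.} Under the hypothesis $\lambda_1\ge\sqrt m$ this forces $\lambda_1=\sqrt m$ whenever $G$ is triangle-free, so the exceptional family in the theorem is exactly the equality family. The whole argument runs on a single nonnegative Perron eigenvector $\mathbf x$ for $\lambda_1$, normalized so that $\|\mathbf x\|=1$; in particular it uses no Motzkin--Straus machinery.

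First I would prove the inequality for connected $G$, and then extend by a short reduction: each connected component $C$ is itself connected, so applying the connected inequality to the component realizing $\lambda_1$ gives $\lambda_1=\lambda_1(C)\le\sqrt{m_C}\le\sqrt m$, with equality only if $m_C=m$, i.e. every other component is edgeless. On a connected $G$ the vector $\mathbf x$ is strictly positive; let $u$ satisfy $x_u=\max_v x_v=:a>0$ and set $B=N(u)$. Applying $A^2$ at $u$ gives
\[
\lambda_1^2\,x_u=(A^2\mathbf x)_u=\sum_{v\sim u}\sum_{w\sim v}x_w\le a\sum_{v\sim u}d(v)\le a\,m,
\]
where the last step uses that $B$ is independent (triangle-freeness): no edge has both endpoints in $B$, so $\sum_{v\sim u}d(v)$ counts each edge meeting $B$ exactly once and is therefore at most $m$. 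Dividing by $a$ yields $\lambda_1^2\le m$.

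The heart of the argument is the equality discussion, and this is where I expect the main work. Equality in the display forces two things at once: every edge of $G$ is incident to $B$ (so $\sum_{v\sim u}d(v)=m$), and $x_w=a$ for every $w\in N(B)=:A$. From the first fact, together with the independence of $B$, I would deduce that $\{A,B\}$ is a bipartition of $V(G)$ with $A$ independent as well, so $G$ is bipartite. It then remains to prove completeness across the bipartition. Since $\mathbf x\equiv a$ on $A$, the eigenvalue equation at any $w\in A$ reads $\lambda_1 a=\sum_{b\sim w}x_b$, a constant independent of $w$; as $u\in A$ is adjacent to all of $B$, this constant equals $\sum_{b\in B}x_b$. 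Because every $x_b>0$ and $\{b:b\sim w\}\subseteq B$, the equality $\sum_{b\sim w}x_b=\sum_{b\in B}x_b$ can hold only if $w$ is adjacent to every vertex of $B$; hence each vertex of $A$ sees all of $B$, i.e. $G=K_{|A|,|B|}$. The converse, that every such graph attains $\lambda_1=\sqrt m$, is the routine computation $\lambda_1(K_{s,t})=\sqrt{st}=\sqrt m$. The delicate points to get right are the bookkeeping that turns $\sum_{v\sim u}d(v)=m$ into a genuine bipartition and the positivity argument forcing completeness; the reduction absorbing isolated vertices and disconnected graphs is the other place where care is needed.
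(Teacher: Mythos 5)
Your proposal is correct, and it takes a genuinely different (more self-contained) route than the paper. The paper reduces to the connected case (its Theorem 4) and gives two proofs: the first uses the moment identities $2m=\sum_i\lambda_i^2$ and $t(G)=\frac{1}{6}\sum_i\lambda_i^3$ together with Perron--Frobenius ($|\lambda_i|\le\lambda_1$) to show that triangle-freeness with $\lambda_1\ge\sqrt m$ forces $\lambda_i\in\{0,-\lambda_1\}$ for $i\ge 2$, whence bipartiteness (eigenvalue symmetry, Lemma 1) and diameter at most two (at most three distinct eigenvalues, Lemma 2) yield a complete bipartite graph; the second proof uses the same counting step as yours, namely $\sum_{u\in N(v)}d(u)=e\bigl(N(v),V\setminus N(v)\bigr)\le m$ for triangle-free $G$, but then invokes the cited bound $\lambda_1\le\sqrt{\max_v\sum_{u\in N(v)}d(u)}$ of Lemma 3 \emph{including} its equality characterization (regular or bipartite semi-regular) and finishes by a degree-count case analysis. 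Your argument shares the counting step with that second proof but replaces the black-box lemma by a direct Perron-vector computation at a vertex of maximal coordinate: equality forces $x\equiv a$ on $N(B)$ and every edge to meet $B=N(u)$, which (with connectivity and independence of $B$) produces the bipartition, and strict positivity of the Perron vector then forces completeness across it. What this buys is a fully self-contained proof --- you essentially inline the relevant direction and equality case of Lemma 3 --- plus an explicit treatment of the reduction from disconnected graphs (isolated vertices and the component realizing $\lambda_1$), which the paper only asserts in one sentence before Theorem 4; what the paper's first proof buys instead is the full spectrum $\{\lambda_1,0,\dots,0,-\lambda_1\}$ of the extremal graphs, and its second proof is shorter modulo the quoted lemma. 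One cosmetic point: in the degenerate case $m=0$ (connected $G=K_1$) your set $B=N(u)$ is empty and the equality discussion is vacuous, so the statement needs the usual convention that such a graph counts among the exceptional family; the paper glosses over this triviality as well.
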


Since $\lambda_1$ is at least the average degree of $G$, one can deduce Mantel's Theorem from Theorem 3. Furthermore, to prove Theorem 3, we only need prove the following theorem.

\begin{theorem}[Nikiforov \cite{Ni3}]
Let $G$ be a connected graph on $m$ edges and $\lambda_1$ be the spectral radius of $G$. If $\lambda_1\geq \sqrt{m}$, then $G$ contains a triangle unless $G$ is a complete bipartite graph.
\end{theorem}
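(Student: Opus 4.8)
The plan is to establish the contrapositive in quantitative form: for a connected triangle-free graph $G$ on $m$ edges one has $\lambda_1\le\sqrt m$, with equality only when $G$ is complete bipartite. Since the hypothesis $\lambda_1\ge\sqrt m$ combined with this bound forces $\lambda_1=\sqrt m$, the conclusion of the theorem follows. First I would fix a Perron eigenvector $\mathbf x=(x_v)_{v\in V(G)}$; because $G$ is connected it can be taken strictly positive, and I choose a vertex $u$ with $x_u=\max_v x_v$. Iterating the eigen-equation $\lambda_1 x_v=\sum_{w\sim v}x_w$ once gives
$$\lambda_1^2 x_u=\sum_{v\sim u}\sum_{t\sim v}x_t=d(u)\,x_u+\sum_{t\ne u}p(u,t)\,x_t,$$
where $p(u,t)$ denotes the number of common neighbours of $u$ and $t$ (equivalently, the number of walks of length $2$ from $u$ to $t$). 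The triangle-free hypothesis enters twice here: $N(u)$ is an independent set, and $p(u,t)=0$ whenever $t\sim u$, so only genuine distance-two vertices contribute to the sum.

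Next I would bound $x_t\le x_u$ in every term and use the identity $\sum_{t\ne u}p(u,t)=\sum_{v\sim u}d(v)-d(u)$ (total length-two walks from $u$ minus the ones returning to $u$) to obtain $\lambda_1^2\le\sum_{v\sim u}d(v)$. Because $N(u)$ is independent, the quantity $\sum_{v\sim u}d(v)$ counts exactly the edges incident to $N(u)$, each once, and is therefore at most $m$. This yields $\lambda_1^2\le m$, which is the desired bound.

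The delicate part, which I expect to be the main obstacle, is the equality discussion. Equality $\lambda_1^2=m$ forces simultaneously: (a) $x_t=x_u$ for every $t$ sharing a common neighbour with $u$; and (b) every edge of $G$ is incident to $N(u)$. From (b), together with the independence of $N(u)$, I conclude that $G$ is bipartite with parts $A:=N(u)$ and $B:=V(G)\setminus A$ (note $u\in B$), since no edge can have both ends in $B$ and none has both ends in $A$. The key observation is that $A=N(u)$ means every vertex of $A$ is adjacent to $u$; hence by connectivity any $b\in B$ has a neighbour $a\in A$, and such $a$ is a common neighbour of $b$ and $u$, so (a) gives $x_b=x_u$ for all $b\in B$.

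Writing $c:=x_u$ and substituting into the eigen-equation at $u$ yields $\lambda_1 c=\sum_{a\in A}x_a$, while the same equation at an arbitrary $b\in B$ gives $\lambda_1 c=\sum_{a\sim b}x_a\le\sum_{a\in A}x_a$, with equality only if $b$ is adjacent to every $a\in A$ (here the strict positivity of $\mathbf x$ is essential). Thus every vertex of $B$ is joined to every vertex of $A$, i.e.\ $G$ is complete bipartite, which finishes the equality analysis. I would close by remarking that each $K_{s,t}$ indeed attains $\lambda_1=\sqrt{st}=\sqrt m$, so the exceptional family named in the statement is precisely the right one.
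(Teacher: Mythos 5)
Your proof is correct, and while its backbone coincides with the paper's second proof, the execution is genuinely more self-contained. The paper actually gives two proofs. The first is spectral-moment based: from $2m=\sum_i\lambda_i^2$ and $6t(G)=\sum_i\lambda_i^3$ it deduces that $t(G)=0$ forces $\lambda_i^2(\lambda_1+\lambda_i)=0$ for all $i\ge 2$, hence some $\lambda_j=-\lambda_1$ and at most three distinct eigenvalues, and then concludes via two cited lemmas (symmetric spectrum characterizes bipartite graphs; a connected graph of diameter $d$ has at least $d+1$ distinct eigenvalues) that $G$ is complete bipartite --- a route entirely different from yours. The second proof is built on exactly your key quantity: it cites the Favaron--Mah\'eo--Sacl\'e bound $\lambda_1\le\sqrt{\max_v\sum_{w\in N(v)}d(w)}$ (the paper's Lemma~3), observes as you do that triangle-freeness gives $\sum_{w\in N(v)}d(w)=e(N(v),V\setminus N(v))\le m$, and then settles equality by invoking that lemma's equality characterization (regular or bipartite semi-regular) followed by a short degree count ($m=rs=ra=sb$, so $r=b$, $s=a$). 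You instead re-derive the bound from scratch via the length-two walk decomposition $\lambda_1^2x_u=d(u)x_u+\sum_{t\ne u}p(u,t)x_t$ at a maximal Perron coordinate --- essentially the standard proof of Lemma~3 specialized to the maximizing vertex --- and, more significantly, you handle equality directly: the rigidity $p(u,t)>0\Rightarrow x_t=x_u$, together with the fact that every edge meets the independent set $N(u)$, yields the bipartition $(A,B)$ with $x$ constant on $B$, and the eigen-equation at each $b\in B$ (using strict positivity of the Perron vector) forces $N(b)=A$. What your route buys is independence from both the FMS equality characterization and the regular/semi-regular case split; what the paper's citation-based version buys is brevity. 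Both your argument and the paper's silently ignore the degenerate case $m=0$, which is harmless.
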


The direct motivation of this subsection is to give two new proofs of Theorem 4 (without help of Motzkin-Straus Inequality).

To give the first proof, the following two lemmas are needed.

\begin{lemma}\cite[p.38]{BH}
A graph $G$ is bipartite if and only if for each eigenvalue $\lambda$ of $G$, $-\lambda$ is also an eigenvalue, with the same multiplicity.
\end{lemma}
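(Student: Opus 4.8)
The plan is to prove both implications, treating this as the standard spectral characterization of bipartiteness. For the forward direction, suppose $G$ is bipartite with bipartition $(X,Y)$. Ordering the vertices so that those of $X$ precede those of $Y$, the adjacency matrix takes the block form $A=\left(\begin{smallmatrix}0&B\\ B^{\top}&0\end{smallmatrix}\right)$, where $B$ is the biadjacency matrix between $X$ and $Y$. Setting $D=\operatorname{diag}(I_{|X|},-I_{|Y|})$, a direct block computation gives $DAD^{-1}=DAD=-A$, since conjugating by $D$ (which squares to the identity) flips the sign of both off-diagonal blocks. Thus $A$ and $-A$ are similar and share the same characteristic polynomial; as the eigenvalues of $-A$ are exactly the numbers $-\lambda$ with $\lambda$ ranging over the eigenvalues of $A$, every eigenvalue $\lambda$ of $G$ is matched by $-\lambda$ with the same multiplicity.

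For the reverse direction, I would argue through traces of powers. Suppose the spectrum of $G$ is symmetric about $0$ with matching multiplicities. For every odd integer $k$ we then have $\operatorname{tr}(A^{k})=\sum_{i=1}^{n}\lambda_i^{k}=0$, because the contributions of each symmetric pair cancel, $\lambda^{k}+(-\lambda)^{k}=0$. On the other hand, $\operatorname{tr}(A^{k})$ equals the number of closed walks of length $k$ in $G$, a nonnegative quantity. Hence $G$ admits no closed walk of odd length. Since a graph contains an odd cycle if and only if it contains a closed walk of odd length (an odd closed walk can always be reduced to an odd cycle), $G$ has no odd cycle and is therefore bipartite.

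The main obstacle, and the only step requiring genuine care, is the reverse direction, specifically the combinatorial bridge from the vanishing of all odd-power traces to the absence of odd cycles. The algebraic input $\operatorname{tr}(A^{k})=0$ is immediate from the symmetry of the spectrum, but one must invoke the interpretation of $(A^{k})_{vv}$ as counting closed walks through $v$ together with the graph-theoretic fact that an odd closed walk forces an odd cycle. The forward implication, by contrast, is a pure similarity argument and presents no difficulty. Since this lemma is quoted from \cite{BH} and used only as a tool in what follows, a complete write-up is unnecessary, but the trace-of-odd-powers route is the cleanest self-contained proof.
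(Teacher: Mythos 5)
Your proof is correct; the paper does not prove this lemma at all but quotes it from \cite{BH}, and your argument---the signature similarity $DAD^{-1}=-A$ for the forward direction, and vanishing odd-power traces $\operatorname{tr}(A^k)=\sum_i\lambda_i^k=0$ forcing the absence of closed odd walks for the converse---is essentially the standard proof found in that reference. One small simplification: in the converse you only need the trivial direction that an odd cycle is itself a closed walk of odd length (so its presence would make some odd trace positive), and the more delicate fact that an odd closed walk can be reduced to an odd cycle is not actually required.
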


\begin{lemma}\cite[p.5]{BH}
Let $G$ be a connected graph with diameter $d$. Then $G$ has at least $d+1$ distinct eigenvalues.
\end{lemma}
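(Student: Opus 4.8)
The plan is to compare the number of distinct eigenvalues of the adjacency matrix $A=A(G)$ with the degree of its minimal polynomial, and then to exploit the combinatorial meaning of the entries of the powers of $A$. Since $A$ is a real symmetric matrix, it is diagonalizable, so its minimal polynomial is exactly $\prod_{i=1}^{k}(x-\mu_i)$, where $\mu_1,\dots,\mu_k$ are the distinct eigenvalues of $G$. In particular, if $G$ has exactly $k$ distinct eigenvalues, then the minimal polynomial has degree $k$, which means that $I,A,A^2,\dots,A^{k-1}$ are linearly independent while every higher power $A^{m}$ with $m\ge k$ is a linear combination of these matrices.

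The combinatorial input is the standard fact that for any two vertices $u,v$ the entry $(A^{\ell})_{uv}$ equals the number of walks of length $\ell$ from $u$ to $v$. Consequently $(A^{\ell})_{uv}=0$ whenever $\ell$ is strictly smaller than the distance between $u$ and $v$, and $(A^{\ell})_{uv}>0$ when $\ell$ equals that distance, since a shortest path is itself such a walk.

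Now I would argue by contradiction. Suppose $G$ has at most $d$ distinct eigenvalues, that is, $k\le d$. Because $G$ has diameter $d$, I can fix two vertices $u,v$ with $\mathrm{dist}(u,v)=d$. Since $d\ge k$, the power $A^{d}$ lies in the span of $I,A,\dots,A^{k-1}$, so I may write
\[
A^{d}=\sum_{j=0}^{k-1}c_j A^{j}
\]
for some scalars $c_j$. Comparing the $(u,v)$-entries, the left-hand side is positive, whereas every term on the right vanishes because $j\le k-1\le d-1<d=\mathrm{dist}(u,v)$. This contradiction forces $k\ge d+1$, which is the assertion.

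The argument is complete once these two ingredients are assembled; there is no delicate estimate to carry out, so the main point is simply to set up the right framework. The step that deserves the most care is the first one: identifying the number of distinct eigenvalues with the degree of the minimal polynomial relies on the symmetry of $A$ in an essential way (for a non-diagonalizable matrix the minimal polynomial may have strictly larger degree), and one must keep the inequality in the correct direction, assuming $k\le d$ precisely so that $A^{d}$ is genuinely expressible through lower powers. With the diagonalizability of real symmetric matrices taken as known, the remaining steps are routine.
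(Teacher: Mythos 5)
Your proof is correct and complete: the minimal-polynomial argument combined with the walk-counting interpretation of the entries of $A^{\ell}$ is exactly the standard proof of this fact. The paper itself gives no proof---it cites the lemma from Brouwer and Haemers---and your argument is precisely the one found in that reference, so there is nothing to amend.
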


\noindent{}
{\bf The first new proof of Theorem 4.}

Let $\lambda_1\geq\lambda_2\geq\ldots \lambda_n$ be the eigenvalues of $G$. By the well known fact (see \cite[p.85]{CDS})
\begin{align*}
2m={\lambda_1}^2+{\lambda_2}^2+\ldots+{\lambda_n}^2
\end{align*}
and the inial condition
\begin{align*}
\lambda_1\geq \sqrt{m},
\end{align*}
we can deduce
\begin{align}
{\lambda_1}^2\geq {\lambda_2}^2+\ldots +{\lambda_n}^2.
\end{align}
Let $t(G)$ be the number of triangles in $G$. It is known (see \cite[p.85]{CDS})
\begin{align}
t(G)=\frac{{\lambda_1}^3+{\lambda_2}^3+\ldots +{\lambda_n}^3}{6}.
\end{align}
By putting inequality (1) into (2), we can obtain
\begin{align*}
t(G)&\geq\frac{{\lambda_1}({\lambda_2}^2+\ldots +{\lambda_n}^2)+{\lambda_2}^3+\ldots +{\lambda_n}^3}{6}\\
&=\frac{{\lambda_2}^2({\lambda_1}+{\lambda_2})+{\lambda_3}^2({\lambda_1}+{\lambda_3})+\ldots+{\lambda_n}^2({\lambda_1}+{\lambda_n})}{6}.
\end{align*}
Note that ${\lambda_1}+{\lambda_2}+{\lambda_3}+\ldots+{\lambda_n}=0$. By the famous Perron-Frobenius theorem, $|\lambda_i|\leq \lambda_1$ for $i=2,3,\ldots,n$ (since $G$ is connected).
Thus $t(G)=0$ if and only if
\begin{align}
{\lambda_i}^2({\lambda_1}+{\lambda_i})=0
\end{align}
for $i=2,3,\ldots,n$.

Now assume that  ${\lambda_i}^2({\lambda_1}+{\lambda_i})=0$ for $i=2,3,\ldots,n$. If $\lambda_i=0$ for all $i=2,3,\ldots,n$, then by the
trace condition on the adjacency matrix, $\lambda_1=0$, and this implies that $G$ is empty, a contradiction. Thus there exists an integer $j\in \{2,3,\ldots,n\}$, such that $\lambda_j\neq 0$, and this implies  $\lambda_j=-\lambda_1$. By Lemma 1, $G$ is bipartite. Since $G$ has only three distinct eigenvalues when $t(G)=0$, by Lemma 2, the diameter of $G$ is at most two. However, if $G$ is not a complete bipartite graph, then the diameter of $G$ is at least three, a contradiction. Thus $G$ is a complete bipartite graph.

The proof is complete.{\hfill$\Box$}

To give the second proof of Theorem 4, the following lemma is needed.

\begin{lemma}\cite[p.203]{FMS}
Let $G$ be a graph with the vertex set $V$. Then $$\lambda_1\leq \sqrt{\max\{\sum_{u\in N(v)}d(u):v\in V\}}.$$ If $G$ is connected, then the equality holds if and only if $G$ is regular or bipartite semi-regular.
\end{lemma}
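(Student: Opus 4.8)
The plan is to study the square $A^2$ of the adjacency matrix, whose largest eigenvalue is $\lambda_1^2$. The key observation is that the row sums of $A^2$ are precisely the quantities appearing in the statement: for every vertex $v$,
\begin{align*}
\sum_{w\in V}(A^2)_{vw}=\sum_{w\in V}\sum_{u\in V}A_{vu}A_{uw}=\sum_{u\in N(v)}\sum_{w\in V}A_{uw}=\sum_{u\in N(v)}d(u)=:\sigma_v.
\end{align*}
Since $A^2$ is a nonnegative symmetric matrix with spectral radius $\lambda_1^2$, the elementary bound ``spectral radius $\le$ maximum row sum'' for nonnegative matrices gives at once $\lambda_1^2\le\max_{v\in V}\sigma_v$, which is the claimed inequality $\lambda_1\le\sqrt{\max\{\sigma_v:v\in V\}}$.

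For the equality statement assume $G$ is connected, and let $x>0$ be a Perron eigenvector of $A$ for $\lambda_1$; note that $A^2x=\lambda_1^2x$. I would first record the easy converse: if $G$ is $r$-regular then every $\sigma_v=r^2=\lambda_1^2$, and if $G$ is bipartite and $(r,s)$-biregular then every $\sigma_v=rs=\lambda_1^2$, so equality holds in both cases.

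For the forward direction I would split according to whether $G$ is bipartite, using that $A^2$ is irreducible when $G$ is connected and non-bipartite, while for $G$ connected bipartite with parts $X,Y$ and $A=\left(\begin{smallmatrix}0&B\\ B^{\top}&0\end{smallmatrix}\right)$ the matrix $A^2$ is block-diagonal with the two irreducible blocks $BB^{\top}$ and $B^{\top}B$, which share the spectral radius $\lambda_1^2$. In each block the row sums are the numbers $\sigma_v$, and equality $\lambda_1^2=\max_v\sigma_v$ together with the criterion ``for an irreducible nonnegative matrix the spectral radius equals the maximum row sum only if all row sums coincide'' forces $\sigma_v$ to be constant on every block. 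Consequently the all-ones vector is a positive eigenvector of each block for $\lambda_1^2$, hence, by uniqueness of the Perron eigenvector, $x$ is constant on each block: constant on all of $V$ in the non-bipartite case, and constant on each of $X$ and $Y$ in the bipartite case. Substituting this back into $Ax=\lambda_1x$ converts ``$x$ constant'' into ``degree constant'': in the non-bipartite case $d(v)=\lambda_1$ for all $v$, so $G$ is regular, and in the bipartite case $x\equiv\alpha$ on $X$ and $x\equiv\beta$ on $Y$ give $d(v)=\lambda_1\alpha/\beta$ for $v\in X$ and $d(w)=\lambda_1\beta/\alpha$ for $w\in Y$, so $G$ is bipartite semi-regular.

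The main obstacle is the equality analysis in the bipartite case. Because $A^2$ is reducible there, one cannot argue directly that the whole eigenvector is constant, and the immediate consequence of equality is only that the neighbor-degree sums $\sigma_v$ are constant on each part --- a condition strictly weaker than biregularity. The crux is therefore to promote ``$\sigma_v$ constant on each part'' to ``degrees constant on each part,'' and the device that achieves this is to pass through the eigenvector: equal row sums make the all-ones vector the Perron vector of each irreducible block, which forces $x$ to be constant on each part, after which the eigen-equation $Ax=\lambda_1x$ delivers the constancy of the degrees.
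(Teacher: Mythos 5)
Your proposal is correct, but there is nothing in the paper to compare it with: the paper does not prove this lemma, it simply quotes it from the cited reference \cite[p.~203]{FMS} and uses it as a black box in the second proof of Theorem 4. Judged on its own, your argument is sound and complete. The inequality via the row sums of $A^{2}$ is the natural route, and you correctly isolate the genuine difficulty in the equality case: for connected bipartite $G$ the matrix $A^{2}$ is reducible, so equality only yields constancy of $\sigma_{v}=\sum_{u\in N(v)}d(u)$ on each part, which is strictly weaker than biregularity; your device of noting that the all-ones vector is then a Perron vector of each irreducible block $BB^{\top}$, $B^{\top}B$, so that the restriction of the Perron vector $x$ of $A$ to each part is constant, and then feeding this back into $Ax=\lambda_{1}x$, closes that gap correctly. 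Two small points you leave implicit are standard but worth recording: (i) the largest eigenvalue of $A^{2}$ is $\lambda_{1}^{2}$ because $\lambda_{1}=\rho(A)\geq|\lambda_{n}|$ by Perron--Frobenius for nonnegative matrices --- this is needed since the inequality is asserted for possibly disconnected $G$; (ii) the blocks $BB^{\top}$ and $B^{\top}B$ are indeed irreducible for connected bipartite $G$ (any two vertices of the same part are joined by a walk of even length) and share the spectral radius $\lambda_{1}^{2}$ since they have the same nonzero spectrum. With these remarks made explicit, your proof is a legitimate self-contained substitute for the citation.
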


\noindent{}
{\bf The second new proof of Theorem 4.}

Assume that $G$ contains no triangle. Thus, for any vertex $v\in V(G)$, $\sum_{u\in N(v)}d(u)= e(N(v),V(G)\backslash N(v))$. This implies that
\begin{align}
\sum_{u\in N(v)}d(u)\leq m.
\end{align}
Suppose that the vertex $v_0$ satiesfies $\sum_{u\in N(v_0)}d(u)=\max\{\sum_{u\in N(v)}d(u):v\in V\}$. By Lemma 3 and the inequality (4), we have
\begin{align}
\sqrt{\sum_{u\in N(v_{0})}d(u)} \leq \sqrt{m}\leq \lambda_1\leq \sqrt{\sum_{u\in N(v_{0})}d(u)}.
\end{align}
Thus all inequalities in (5) become equalities. By Lemma 3, $G$ is regular or bipartite semi-regular. If $G$ is $k$-regular, then $\lambda_1=k=\sqrt{\frac{kn}{2}}$. It follows that $k=n/2$. Since $G$ contains no triangles, $G=K_{n/2,n/2}$. If $G$ is bipartite semi-regular, then
assume that $G=G[A,B]$, where $A$ and $B$ are two parts with $|A|=a$ and $|B|=b$. Assume that the degree of every vertex in $A$ is $r$ and the degree of every vertex in $B$ is $s$. Obviously, $r\leq b$ and $s\leq a$. W.l.o.g., assume that $v_0\in A$. Then $\sum_{u\in N(v_0)}d(u)=rs$, and this implies that $m=rs$ by (5). On the other hand, we have $m=ra=sb$, and it follows that $r=b$ and $s=a$. Thus $G$ is a complete bipartite graph.

The proof is complete. {\hfill$\Box$}

\subsection{Consecutive even cycles and spectral radius}
Nikiforov \cite{Ni2} proved the following theorem, which contributes to spectral extremal graph theory.

\begin{theorem}[Nikiforov \cite{Ni2}]
Let $G$ be a graph of sufficiently large order $n$ with $\lambda_1>\sqrt{\lfloor n^2/4\rfloor}$. Then $C_l\subset G$ for every $3\leq l\leq n/320$.
\end{theorem}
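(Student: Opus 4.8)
The plan is to separate the easy base case $l=3$ from the longer cycles. For $l=3$, observe that the hypothesis $\lambda_1>\sqrt{\lfloor n^2/4\rfloor}$ already forces a triangle: if $G$ were triangle-free, then Mantel's Theorem (Theorem 1) gives $m\le\lfloor n^2/4\rfloor$, while Nosal's Theorem (Theorem 2) gives $\lambda_1\le\sqrt{m}$, so that $\lambda_1\le\sqrt{\lfloor n^2/4\rfloor}$, contradicting the hypothesis. Hence $C_3\subset G$, and the whole problem reduces to producing cycles of every length $4\le l\le n/320$.

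Next I would convert the spectral hypothesis into a purely combinatorial density statement and then extract a subgraph of large minimum degree. Since $\lambda_1^2\le\sum_i\lambda_i^2=2m$, the hypothesis yields $m>\tfrac12\lfloor n^2/4\rfloor$, so the average degree of $G$ exceeds $n/4$. Applying the standard fact that a graph of average degree $d$ contains a subgraph of minimum degree at least $d/2$, I obtain a subgraph $H\subseteq G$ with $\delta(H)\ge n/8$; in particular $H$ has at least $n/8$ vertices and minimum degree linear in $n$. The bound $\lambda_1\le\Delta$ shows in parallel that $G$ has a vertex of degree at least $\lambda_1>\sqrt{\lfloor n^2/4\rfloor}$, a fact useful for controlling the connectivity and non-bipartiteness of the dense region so that the cycles produced are not confined to even lengths.

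Finally I would feed $H$ into a combinatorial theorem on consecutive cycle lengths in graphs of linear minimum degree: a graph $H$ with $\delta(H)\ge cn$ contains cycles of all lengths (all even lengths, and once an odd cycle is present also all odd lengths) throughout an interval $[g,c'n]$ whose length is linear in $n$, as provided by Bondy-type pancyclicity and weakly-pancyclic results. Combined with the triangle already produced, this should deliver $C_l$ for every $l$ up to a linear threshold. The main obstacle here is quantitative rather than conceptual: one must (i) ensure the extracted dense subgraph is non-bipartite, so that short odd cycles appear and the interval of attainable lengths is not restricted to even values, and (ii) track the constants through both the minimum-degree extraction and the cycle-length lemma carefully enough to reach the explicit bound $n/320$. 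A secondary difficulty is ruling out gaps in the interval of cycle lengths, which is precisely where the rotation/extension argument for a longest path, or a Bondy–Simonovits style even-cycle argument, has to be invoked.
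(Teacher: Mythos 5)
There is a genuine gap, and it is exactly the point your own ``main obstacle'' remark brushes past: the odd cycles. Your reduction keeps only the consequence $2m\geq\lambda_1^2>\lfloor n^2/4\rfloor$, i.e.\ a density statement, and then extracts $H\subseteq G$ with $\delta(H)\geq n/8$. But every combinatorial fact you retain --- average degree above $n/4$, a subgraph of linear minimum degree, even a vertex of degree at least $\sqrt{\lfloor n^2/4\rfloor}$ --- is satisfied by the complete bipartite graph $K_{\lceil n/2\rceil,\lfloor n/2\rfloor}$, which has $\lambda_1=\sqrt{\lfloor n^2/4\rfloor}$ (the exact borderline of the hypothesis) and contains no odd cycle whatsoever. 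So the subgraph $H$ produced by the Erd\H{o}s extraction may perfectly well be bipartite, and no pancyclicity or weakly-pancyclic theorem applied to $H$ can ever produce $C_5,C_7,\ldots$ The triangle you obtain from Mantel plus Nosal shows only that $G$ is non-bipartite somewhere; that triangle may be disjoint from the dense region, and non-bipartiteness of $G$ together with $\delta(H)\geq n/8$ does not meet the hypotheses of the Brandt-type weakly-pancyclic theorems (which need minimum degree around $n/3$ and need it for $H$ itself). The whole content of Nikiforov's theorem is a stability argument living precisely at this point: either $G$ has substantially more than $n^2/4$ edges, in which case dense-graph machinery gives all cycle lengths, or $G$ is structurally close to $K_{\lceil n/2\rceil,\lfloor n/2\rfloor}$, and then the strict spectral excess $\lambda_1>\sqrt{\lfloor n^2/4\rfloor}$ is used to force edges inside a part and to build odd cycles of every length up to $n/320$ around them. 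None of this can be recovered once the spectral hypothesis has been collapsed to an edge count.

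It is worth noting that the paper does not prove this theorem at all --- it is quoted from Nikiforov's paper on spectral conditions for odd cycles --- and that your strategy is, almost verbatim, the paper's proof of its Theorem 7: Stanley's bound (Lemma 4) to get $m>\frac{1}{2}\lfloor n^2/4\rfloor$, Erd\H{o}s's lemma (Lemma 5) to get $\delta(H)\geq n/8$, then Bondy (Lemma 6) when $H$ is small and the Allen--B\"ottcher--Hladky--Cooley theorem (Lemma 7) when $H$ is large. That route is sound, but it is sound exactly because Theorem 7 asks only for \emph{even} cycles, for which bipartite dense subgraphs are not an obstruction. Your proposal would prove (a version of) the even-cycle statement, not the full statement including every odd $3\leq l\leq n/320$.
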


The following is a direct corollary.

\begin{theorem}[Nikiforov \cite{Ni2}]
Let $G$ be a graph of sufficiently large order $n$ with $\lambda_1>\sqrt{\lfloor n^2/4\rfloor}$. Then $C_l\subset G$ for every even $4\leq l\leq n/320$.
\end{theorem}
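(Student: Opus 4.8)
The plan is to obtain Theorem 6 directly from Theorem 5, of which it is merely a specialization to even cycle lengths. First I would note that the hypotheses of the two statements coincide verbatim: both require $G$ to have sufficiently large order $n$ and spectral radius $\lambda_1>\sqrt{\lfloor n^2/4\rfloor}$. Thus no reformulation of the assumptions is needed, and the entire task reduces to comparing the two ranges of $l$ in the conclusions.

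The key step is the elementary set inclusion
$$\{\,l : 4\leq l\leq n/320,\ l\ \text{even}\,\}\ \subseteq\ \{\,l : 3\leq l\leq n/320\,\},$$
which holds because every even integer $l$ with $4\leq l\leq n/320$ in particular satisfies $3\leq l\leq n/320$ (as $4\geq 3$). Concretely, I would fix an arbitrary even $l$ in the range $4\leq l\leq n/320$ and apply Theorem 5 to this value of $l$; the theorem immediately yields $C_l\subset G$, which is exactly the assertion of Theorem 6. Since $l$ was arbitrary in the stated range, this establishes the conclusion for all admissible even $l$ at once.

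There is essentially no obstacle to overcome here, and no new combinatorial or spectral input is required beyond quoting Theorem 5: the even-cycle statement is strictly weaker, asking for cycles only of the even lengths already guaranteed by the consecutive-cycle result. The only bookkeeping point worth checking is the endpoint at the lower end of the range, namely that restricting from odd-or-even $l\geq 3$ to even $l\geq 4$ loses no admissible length; this is automatic once $n$ is large enough that $n/320\geq 4$, which is subsumed by the ``sufficiently large order'' hypothesis. Hence the substantive content of this subsection resides in Theorem 5 (and in the separate even-cycle improvement treated afterward), while Theorem 6 itself follows as a one-line corollary.
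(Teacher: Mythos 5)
Your proposal is correct and matches the paper exactly: the paper states Theorem 6 with no proof at all, calling it ``a direct corollary'' of Theorem 5, and the intended justification is precisely your observation that every even $l$ with $4\leq l\leq n/320$ lies in the range $3\leq l\leq n/320$ covered by Theorem 5.
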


By using some spectral inequality and results from structural graph theory, we prove a theorem which slightly improves Theorem 6.
\begin{theorem}
Let $G$ be a graph of sufficiently large order $n$ with $\lambda_1>\sqrt{\lfloor n^2/4\rfloor}$. Then $C_l\subset G$ for every even $4\leq l\leq \lceil \frac{n}{28}\rceil$.
\end{theorem}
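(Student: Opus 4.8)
The plan is to extract from the spectral hypothesis a single vertex whose neighbourhood is extremely rich, and then to use that vertex as a universal hub that closes long paths into even cycles of every admissible length. First I would feed the hypothesis into the spectral inequality of Lemma 3: since $\lambda_1>\sqrt{\lfloor n^2/4\rfloor}$ and $\lambda_1\le\sqrt{\max_v\sum_{u\in N(v)}d(u)}$, there is a vertex $v_0$ with
\[
\sum_{u\in N(v_0)}d(u)>\lfloor n^2/4\rfloor .
\]
Writing $A=N(v_0)$, the trivial bound $d(u)\le n-1$ then forces $|A|=d(v_0)>n/4$, so $v_0$ is joined to a large set $A$ whose vertices carry about $n^2/4$ incident edge-endpoints in total.

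The key reduction is that $v_0$ is adjacent to \emph{every} vertex of $A$. Hence any path $w_0w_1\cdots w_{2i}$ in $G-v_0$ whose even-indexed vertices $w_0,w_2,\ldots,w_{2i}$ all lie in $A$ closes up to the cycle $v_0w_0w_1\cdots w_{2i}v_0$ of length $2i+2$. Consequently a \emph{single} path of this alternating type with about $n/28$ vertices yields, by taking nested initial segments, all of $C_4,C_6,\ldots$ up to $\lceil n/28\rceil$ at once. The whole problem therefore reduces to producing one sufficiently long path in which alternate vertices lie in $A$.

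To find that path I would split on where the edge-mass around $A$ sits. Put $A'=V(G)\setminus(A\cup\{v_0\})$. If $G[A]$ is internally dense, the Erd\H{o}s--Gallai path bound applied to $G[A]$ produces a long path all of whose vertices lie in $A$, which is trivially alternating; if instead $G[A]$ is sparse, then most of the mass counted by $\sum_{u\in A}d(u)$ lives in the bipartite graph between $A$ and $A'$, and Erd\H{o}s--Gallai there produces a long path alternating between $A$ and $A'$. In both regimes the length obtained is a fixed fraction of $n$, and closing the nested segments through $v_0$ delivers the even cycles.

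The main obstacle is purely the constant. The crude split above already yields even cycles well past $n/28$, so the genuine work---and the reason this is only a slight improvement on Theorem 6---is to run the estimates so that all the losses are controlled simultaneously: the endpoints of the Erd\H{o}s--Gallai path must be dragged into $A$, the bipartite and internal cases must be balanced at their worst common threshold, and the floor/ceiling together with the lower-order $O(n)$ terms must be absorbed, which is precisely where the phrase ``sufficiently large $n$'' is spent. A cleaner alternative I would keep in reserve is to first establish, again via the hub $v_0$, that the circumference is at least $\lceil n/28\rceil$, and then invoke a weak-bipancyclicity result to fill in every even length from $4$ up to the circumference; it is exactly the even-only nature of the conclusion that lets this route sidestep the near-bipartite graphs responsible for Nikiforov's much larger constant in the all-cycles Theorem 5.
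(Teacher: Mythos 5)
Your argument is sound, but it is a genuinely different proof from the paper's. The paper works globally: Stanley's bound (Lemma 4) converts $\lambda_1>\sqrt{\lfloor n^2/4\rfloor}$ into $2m\geq n^2/4$, hence average degree above $n/4$; Erd\H{o}s' lemma (Lemma 5) extracts an induced subgraph $H$ with $\delta(H)\geq n/8$; and the even cycles come either from Bondy's pancyclicity theorem (Lemma 6), when $H$ has at most $n/4$ vertices, or from the minimum-degree theorem of Allen, B\"ottcher, Hladk\'y and Cooley (Lemma 7) with $k=8$, which supplies even cycles up to $\lceil n'/7\rceil\geq\lceil n/28\rceil$. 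That last theorem, with its threshold $n_0=O(k^{20})$, is where both the constant $28$ and the phrase ``sufficiently large'' actually come from --- so your closing guess that the constant arises from balancing two density cases, or that the paper routes through circumference plus weak bipancyclicity, does not match the paper. You instead argue locally: the Favaron--Mah\'eo--Sacl\'e bound (Lemma 3 of the paper) gives a hub $v_0$ with $\sum_{u\in N(v_0)}d(u)>\lfloor n^2/4\rfloor$; writing $A=N(v_0)$ and using $\sum_{u\in A}d(u)=2e(A)+e(A,V\setminus A)$, one of the two terms exceeds $n^2/8-O(1)$, and Erd\H{o}s--Gallai then yields either a path with roughly $n/8$ edges inside $G[A]$ (all vertices in $A$, so trivially of your alternating type) or a path with roughly $n/4$ edges in the bipartite graph between $A$ and $A'$, which after truncating at most one vertex at each end has both endpoints, and hence all even-indexed vertices, in $A$; closing nested initial segments through $v_0$ then delivers $C_4,C_6,\ldots$ exactly as you describe. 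Each route buys something: the paper's proof is short modulo its citations, while yours is elementary and self-contained, spends ``sufficiently large $n$'' only on $O(1)$ losses rather than on $n_0=O(k^{20})$, and in fact proves more than the stated theorem --- the worst of your two cases already gives even cycles up to a length of order $n/8$, comfortably beyond $\lceil n/28\rceil$, so your worry that the ``genuine work'' is squeezing the constant down to $28$ is misplaced. To turn the sketch into a complete proof you need only fix the dichotomy threshold explicitly, state the Erd\H{o}s--Gallai theorem you invoke (it is not among the paper's lemmas), and record the two-line endpoint-truncation and segment-closing computations; the reserve route via circumference and weak bipancyclicity is unnecessary.
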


To give the proof, the following four lemmas are needed.
\begin{lemma}[Stanley \cite{S}]
Let $G$ be a graph on $n$ vertices and $m$ edges. Then $\lambda_1\leq-\frac{1}{2}+\sqrt{2m+\frac{1}{4}}$.
\end{lemma}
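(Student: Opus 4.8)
The plan is to recast the stated inequality in the equivalent polynomial form
$\lambda_1^2 + \lambda_1 \le 2m$, which is legitimate because $\lambda_1 \ge 0$ (so that squaring $\lambda_1 + \tfrac12 \le \sqrt{2m+\tfrac14}$ is reversible), and because $-\tfrac12 + \sqrt{2m+\tfrac14} = \tfrac{-1+\sqrt{8m+1}}{2}$; the empty-graph case is trivial. First I would pass to the component of $G$ attaining $\lambda_1$ (its edge count is at most $m$, so it suffices to prove $\lambda_1^2 + \lambda_1 \le 2e$ for connected graphs with $e$ edges) and fix a nonnegative unit eigenvector $x = (x_1,\dots,x_n)^T$ for $\lambda_1$, so that $\lambda_1 = x^TAx = 2\sum_{ij\in E}x_ix_j$ while $\sum_i x_i^2 = 1$. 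I would stress at the outset that the purely spectral identities $2m = \sum_i \lambda_i^2$ and $\sum_i \lambda_i = 0$ are homogeneous of degree two in the $\lambda_i$, so they can only yield bounds of the shape $\lambda_1 \le \sqrt{2m}$; the arithmetic gain of the linear correction in Stanley's bound must instead come from the combinatorial structure of $G$ (the extremal graph being a clique). This signals that a counting/extremal argument, not an eigenvalue inequality, is needed.

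The core of the plan is an exchange (compression) argument establishing $\lambda_1(G) \le \lambda_1(K_k)$ when $m = \binom{k}{2}$. Ordering the vertices so that $x_1 \ge x_2 \ge \dots \ge x_n \ge 0$, I would look for an improving local move: if $a,b$ satisfy $x_a \ge x_b$ and there is a vertex $c \ne a$ with $c \sim b$ but $c \not\sim a$, then deleting the edge $bc$ and inserting the edge $ac$ keeps the edge count equal to $m$ and changes $x^TAx$ by exactly $2x_c(x_a - x_b) \ge 0$. By the variational characterization, the new graph $G'$ then satisfies $\lambda_1(G') \ge x^TA(G')x \ge x^TA(G)x = \lambda_1(G)$. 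Iterating such moves (while handling ties among equal $x$-values so the process terminates) drives $G$ to a graph $T$ with $e(T)=m$ and nested neighborhoods, with $\lambda_1(T) \ge \lambda_1(G)$. I would then argue that among graphs with nested neighborhoods and $\binom{k}{2}$ edges the spectral radius is largest for $K_k$ together with isolated vertices, where $\lambda_1 = k-1$ and hence $\lambda_1^2 + \lambda_1 = k(k-1) = 2m$, as desired.

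For a general edge count I would fix $k$ with $\binom{k}{2} \le m < \binom{k+1}{2}$ and conclude by monotonicity: the compression step bounds $\lambda_1$ by the maximum spectral radius over $m$-edge graphs, a quantity that is nondecreasing in $m$ and never exceeds $\tfrac{-1+\sqrt{8m+1}}{2}$, a value equal to $k-1$ exactly at the triangular numbers $m = \binom{k}{2}$ (there $1 + 8\binom{k}{2} = (2k-1)^2$). The same analysis records the equality case: equality holds precisely when $G$ is $K_k$ with possibly some isolated vertices and $m = \binom{k}{2}$.

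I expect the main obstacle to be the extremal identification rather than the one-line switching inequality. One must verify that the sequence of compressions genuinely terminates (controlling ties among equal eigenvector entries and tracking connectivity), and, more seriously, that the resulting nested-neighborhood graph with $\binom{k}{2}$ edges cannot outdo $K_k$. Making this comparison airtight amounts to proving that the clique-plus-isolated-vertices graph uniquely maximizes $\lambda_1$ among all graphs with a prescribed number of edges; once that is in hand, the extension to non-triangular $m$ is a routine monotonicity check.
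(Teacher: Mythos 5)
Your reduction to the equivalent quadratic inequality $\lambda_1^2+\lambda_1\le 2m$ is the right first move, but the extremal/compression route you build on it does not close, for two reasons. First, the decisive final step fails: for $\binom{k}{2}<m<\binom{k+1}{2}$ your monotonicity argument can only bound $\lambda_1$ by the maximum spectral radius over graphs with $\binom{k+1}{2}$ edges, i.e.\ by $k$, whereas Stanley's bound $\frac{-1+\sqrt{8m+1}}{2}$ is \emph{strictly less} than $k$ at every non-triangular $m$. Concretely, for $m=2$ the lemma demands $\lambda_1\le\frac{\sqrt{17}-1}{2}\approx 1.56$, but rounding up to $m=3$ gives only $\lambda_1\le\lambda_1(K_3)=2$. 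Your phrase ``a quantity that is nondecreasing in $m$ and never exceeds $\frac{-1+\sqrt{8m+1}}{2}$'' is circular: that the $m$-edge maximum never exceeds this value at non-triangular $m$ \emph{is} Stanley's theorem, the thing being proved. Second, even at triangular $m$ the key extremal identification --- that among nested-neighborhood graphs with $\binom{k}{2}$ edges the spectral radius is maximized by $K_k$ plus isolated vertices --- is asserted rather than proved; this is essentially the triangular case of the Brualdi--Hoffman extremal problem (settled in general by Rowlinson) and is substantially harder than the inequality you are trying to derive from it. The Kelmans-type switching step itself ($x^TAx$ changes by $2x_c(x_a-x_b)\ge 0$) is fine, but it only reduces the problem to this unproved comparison.

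The gap traces back to your opening heuristic that ``a counting/extremal argument, not an eigenvalue inequality, is needed'': in fact a short eigenvector argument proves the quadratic form directly, uniformly in $m$, with no case split on triangular numbers. Take a nonnegative eigenvector for $\lambda_1$ normalized so its largest entry is $1$, attained at a vertex $v$; then $\lambda_1=\lambda_1 x_v=\sum_{u\sim v}x_u\le d(v)$, while $\lambda_1^2=\lambda_1^2x_v=(A^2x)_v\le \sum_{u\sim v}d(u)\le 2m-d(v)$, and adding the two gives $\lambda_1^2+\lambda_1\le 2m$, which is the lemma. This is essentially Stanley's original proof. The present paper does not reprove the lemma either: it cites Stanley and instead establishes the stronger Theorem~11 by Nikiforov's walk-counting technique, obtaining $\lambda_1^2+\lambda_1\le \max\{\sum_{u\in N[v]}d(u):v\in V\}$ from the recursion $w_k\le \max\{\sum_{u\in N[v]}d(u):v\in V\}\cdot w_{k-2}-w_{k-1}$ and a limit argument, and then notes (Remark~1) that $\max\{\sum_{u\in N[v]}d(u):v\in V\}\le 2m$ recovers Stanley's inequality. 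Either of these local counting arguments delivers the linear correction term you thought had to come from global extremal structure.
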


\begin{lemma}[Erd\"{o}s \cite{E}]
Let $G$ be a graph and $d$ be its average degree. If $d\geq 2k$, where $k$ is a positive integer, then $G$ contains an induced subgraph with minimum degree at least $k+1$.
\end{lemma}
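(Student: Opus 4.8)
The plan is to prove the lemma by a greedy vertex-deletion argument, showing that one cannot strip away all of $G$ if at every step we only remove a vertex of low degree. Write $n=|V(G)|$ and $m=|E(G)|$, so that the hypothesis $d\geq 2k$, with $d=\frac{2m}{n}$, is exactly the edge bound
\begin{align*}
m\geq kn.
\end{align*}
First I would set up the following process: as long as the current graph is nonempty and contains a vertex of degree at most $k$, delete one such vertex together with all edges incident to it. The process terminates either at the empty graph or at a nonempty induced subgraph $H$ in which every remaining vertex has degree at least $k+1$; in the latter case $H$ is precisely the induced subgraph sought. Thus the whole lemma reduces to showing that the process cannot delete all of $V(G)$.

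Next I would bound the number of edges against this process. Suppose for contradiction that every vertex is eventually deleted, and let $v_1,v_2,\dots,v_n$ be the vertices listed in order of deletion; by construction, when $v_i$ is removed its degree in the then-current graph (on $\{v_i,v_{i+1},\dots,v_n\}$) is at most $k$. Each edge of $G$ is destroyed exactly once, namely when the first of its two endpoints is deleted, so $m$ equals the sum of these deletion-time degrees. Since the last vertex $v_n$ is removed with degree $0$, this gives
\begin{align*}
m=\sum_{i=1}^{n} d_i \leq k(n-1)+0 = k(n-1),
\end{align*}
where $d_i$ denotes the degree of $v_i$ at the moment it is deleted. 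Combining with $m\geq kn$ yields $kn\leq k(n-1)$, which is impossible because $k\geq 1$. Hence the process must halt at a nonempty graph, finishing the proof.

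The only place requiring genuine care is the bookkeeping in the edge count: one must check that each edge is charged to exactly one deletion and that the boundary contribution from $v_n$ (degree $0$) is kept, since it is this term that turns the estimate strict and produces the contradiction. An alternative route that avoids any ordering is the extremal formulation: take $H$ to be a vertex-minimal induced subgraph of $G$ whose average degree is still at least $2k$ (such $H$ exists since $G$ itself qualifies). If some $v\in V(H)$ had $d_H(v)\leq k$, then deleting $v$ removes at most $k$ edges, and a one-line computation using $m_H\geq k\,n_H$ shows that $H-v$ still has average degree at least $2k$, contradicting minimality; therefore $\delta(H)\geq k+1$. I would present the deletion argument as the main proof and remark that the extremal version yields the same conclusion, since in both cases the crux is simply the inequality $m\geq kn$ together with the observation that low-degree deletions cannot account for $kn$ edges.
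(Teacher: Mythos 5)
Your proof is correct, but note that there is no proof in the paper to compare against: the paper quotes this lemma as a known result of Erd\"{o}s \cite{E} and uses it as a black box in the proof of Theorem 7, so you have supplied the missing argument rather than an alternative to one. What you give is the standard proof of this classical fact (essentially the minimum-degree extraction argument found in textbooks, e.g.\ Diestel's Proposition 1.2.2, specialized to the integer threshold $k$), and your bookkeeping is sound: translating $d\geq 2k$ into $m\geq kn$, charging each edge exactly once to the deletion of its first-removed endpoint, bounding each charge by $k$, and observing that the final deletion charges $0$ yields $m\leq k(n-1)<kn\leq m$, and you rightly flag that it is the boundary term $d_n=0$ that converts $m\leq kn$ (no contradiction) into a strict inequality. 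Your alternative extremal formulation is equally valid; the one point you leave unstated is that a vertex-minimal $H$ with average degree at least $2k$ cannot be a single vertex (its average degree would be $0<2k$), so $H-v$ is nonempty, and then $m_{H-v}\geq m_H-k\geq kn_H-k=k(n_H-1)$ contradicts minimality exactly as you claim. The two variants are really the same inequality $m\geq kn$ packaged differently --- the deletion order makes the induction explicit, while the minimal-counterexample version hides it --- and either would serve the paper as a self-contained proof of a lemma it currently delegates entirely to the literature.
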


\begin{lemma}[Bondy \cite{B}]
Let $G$ be a graph on $n$ vertices and $\delta> n/2$ be the minimum degree. Then $C_l\subset G$ for $3\leq l\leq n$.
\end{lemma}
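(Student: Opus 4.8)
The plan is to first produce a Hamilton cycle and then to extract every shorter length by successively shortening cycles along chords, using the strict hypothesis $\delta>n/2$ to eliminate the single obstruction to this process.

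First I would establish that $G$ is Hamiltonian. Since $\delta>n/2$ we certainly have $\delta\ge n/2$, so by Dirac's theorem (which itself follows from an elementary exchange argument along a longest path) $G$ contains a Hamilton cycle $C=v_1v_2\cdots v_nv_1$; this already gives $C_n\subset G$. I would also record the consequence $2m=\sum_i d(v_i)>n\cdot\frac{n}{2}=\frac{n^2}{2}$, so that $m>\frac{n^2}{4}$. This strict inequality is decisive at the end.

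Next I would run a downward induction on the length, showing that a cycle of length $k$ with $4\le k\le n$ yields one of length $k-1$. The basic device is a short chord: if the current cycle $u_1u_2\cdots u_ku_1$ carries an edge $u_iu_{i+2}$, then replacing the path $u_iu_{i+1}u_{i+2}$ by that chord deletes exactly one vertex and produces a $C_{k-1}$. More generally, a chord of $C$ joining two vertices at cyclic distance $s$ splits $C$ into cycles of lengths $s+1$ and $n-s+1$, so a rich enough supply of chords of differing lengths would deliver all intermediate cycle lengths at once. The role of the degree hypothesis is to guarantee, at each stage, that the required shortening chord is present.

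The main obstacle is exactly the case in which the shortening step fails, that is, a cycle admits no chord trimming its length by one; I expect this to be the technical heart of the argument. Here I would carry out Bondy's structural analysis: if a Hamiltonian graph with $m\ge\frac{n^2}{4}$ is not pancyclic, then the simultaneous absence of all short chords forces the neighbourhoods along $C$ into a rigid non-crossing pattern, which pins $G$ down to the balanced complete bipartite graph $K_{n/2,n/2}$ (and indeed in $K_{n/2,n/2}$ the two endpoints of any cyclic-distance-$2$ pair lie in the same part, so no shortening chord exists and no odd cycle appears). Since $K_{n/2,n/2}$ has exactly $\frac{n^2}{4}$ edges while our strict hypothesis gives $m>\frac{n^2}{4}$, the graph $G$ cannot be $K_{n/2,n/2}$. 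Hence the exceptional case is vacuous, the downward induction runs to completion, and $C_l\subset G$ for every $3\le l\le n$.
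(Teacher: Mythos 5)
The paper itself supplies no proof of this lemma: it is quoted as a known theorem from Bondy's \emph{Pancyclic graphs I} \cite{B}, so your attempt has to stand on its own. Your outer reduction is correct as far as it goes: Dirac's theorem gives a Hamilton cycle, the degree sum gives $2m\geq n\delta>n^2/2$, hence $m>n^2/4$, and the extremal graph $K_{n/2,n/2}$ is then excluded --- indeed even more directly than by the edge count, since $K_{n/2,n/2}$ is $n/2$-regular while $\delta>n/2$. If you are allowed to invoke Bondy's edge-count theorem (every Hamiltonian graph with $m\geq n^2/4$ is pancyclic or equals $K_{n/2,n/2}$) as a black box, this is a valid derivation of the stated lemma.

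The genuine gap is that the lemma you are proving \emph{is} Bondy's theorem in its Dirac-condition form, and the step you defer to ``Bondy's structural analysis'' is exactly the content at stake, so as written the argument is essentially circular. Worse, your sketch of that step would not run as described: the downward induction requires a distance-$2$ chord on the \emph{current} cycle at each stage, and the absence of such a chord on one particular $k$-cycle forces nothing global about $G$ --- one would at least have to switch among all cycles of that length, and no ``rigid non-crossing pattern'' argument is set up to handle this. Bondy's actual proof is not a one-length-at-a-time shortening at all; it is a counting argument: assuming some cycle length $l$ is missing, one shows that for suitable pairs of positions along the Hamilton cycle at most one of two specified edges can occur, which bounds $m$ by $n^2/4$, with the equality analysis yielding $K_{n/2,n/2}$. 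To make your proposal self-contained you must either reproduce that counting argument or replace it with a direct proof of the weaker fact actually needed here (that $\delta\geq (n+1)/2$ forces pancyclicity, e.g.\ exploiting that any two vertices have at least $2\delta-n\geq 1$ common neighbours); some substitute for the black box is unavoidable.
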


\begin{lemma}[Allen et al. \cite{ABHC}]
Let $G$ be a graph on $n$ vertices and $\delta\geq n/k$ be the minimum degree, where $k$ be an integer. If $n\geq n_0=O(k^{20})$, then $C_l\subset G$ for every even $4\leq l\leq \lceil \frac{n}{k-1}\rceil$.
\end{lemma}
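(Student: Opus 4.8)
The plan is to reduce to a single highly connected, dense piece and then realise \emph{every} even length in $[4,\lceil n/(k-1)\rceil]$, using the minimum degree to manufacture consecutive even cycle lengths and using the hypothesis $n\ge n_0=O(k^{20})$ to anchor the short ones. First I would pass to the largest connected component $H$. Since every vertex has at least $n/k$ neighbours, each component has more than $n/k$ vertices, so there are at most $k-1$ of them; hence $|H|\ge n/(k-1)$, while $H$ remains connected with $\delta(H)\ge n/k$. Writing $T=\lceil n/(k-1)\rceil$, one has $T\le\tfrac{k}{k-1}\,\delta(H)+1\le 2\,\delta(H)$, so the target length sits just above the minimum degree. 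The configuration to keep in mind is a path of cliques, up to $k-1$ copies of $K_{s}$ with $s\approx T$ glued along cut vertices: it satisfies $\delta\ge n/k$ and its longest even cycle, of length exactly $T$, lives inside one clique. This shows both that $n/(k-1)$ is the right (tight) bound and that the long even cycles may have to be sought inside a single $2$-connected block. Accordingly I would extract a $2$-connected subgraph $B\subseteq H$ of circumference at least $T$; in the clique-like case $\delta(B)>|B|/2$ and Lemma 6 finishes immediately, while in general producing such a $B$ from the degree condition — via the block-tree of $H$ and a Dirac/Erd\H{o}s--Gallai circumference estimate, or via a direct expansion argument — is already part of the work.

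I would then anchor the bottom of the range. Because $n\ge O(k^{20})$ forces $\delta(H)\ge n/k\gg\sqrt{n}$, the piece $B$ lies far above the $C_4$-free threshold, so by the K\H{o}v\'{a}ri--S\'{o}s--Tur\'{a}n bound it contains $C_4$; more generally the Bondy--Simonovits estimate $\mathrm{ex}(N,C_{2t})=O(t\,N^{1+1/t})$ forces $C_{2t}$ for every $t$ up to a suitable power of $k$, since the edge count comfortably exceeds $t\,N^{1+1/t}$ in that range. This is precisely the step that consumes the largeness of $n$, and it shows that the even girth of $B$ is $4$ and that all short even lengths occur.

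The heart of the argument is to fill the whole interval up to $T$. Here I would run a P\'{o}sa-type rotation--extension inside $B$: keep an even cycle, or a path with one pinned endpoint, lying in the dense region, and show its length can be altered by exactly $2$ at each step while staying inside $B$, so that the set of attainable even lengths is an unbroken arithmetic progression of common difference $2$. The minimum-degree condition supplies, at each step, enough unused neighbours to carry out the adjustment, and the progression runs from the even girth established above up to the circumference of $B$, which is at least $T$. Combining these consecutive lengths with the short cycles of the previous paragraph yields $C_l\subseteq B\subseteq G$ for every even $l$ with $4\le l\le T$.

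The main obstacle is this no-gaps step together with the choice of $B$: one must guarantee that the current even cycle can always be lengthened or shortened by \emph{exactly} $2$ without leaving the high-minimum-degree region, that the resulting interval of even lengths really reaches $\lceil n/(k-1)\rceil$, and that a $2$-connected block of circumference at least $T$ can be isolated in the first place. Threading all the density and expansion thresholds so that they hold at once is what fixes the explicit bound $n_0=O(k^{20})$, but the genuinely delicate content is the unbroken-interval property of the rotation argument.
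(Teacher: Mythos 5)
First, a point of comparison: the paper does not prove this lemma at all --- it is quoted from Allen, B\"{o}ttcher, Hladk\'{y} and Cooley \cite{ABHC} and used as a black box in the proof of Theorem 7. So there is no in-paper argument to match; your attempt must stand on its own as a proof of the cited theorem, and it does not.

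The reduction to the largest component, the tightness example (a path of at most $k-1$ cliques of order roughly $n/(k-1)$ glued at cut vertices), and the Bondy--Simonovits anchoring of short even cycles are all sound --- though note that the density step forces $C_{2t}$ only for $t=O(N/k)$, nowhere near $T=\lceil n/(k-1)\rceil$, so everything above that range rests on your structural steps. And those two steps, which you yourself flag as ``part of the work,'' are not deferred technicalities: together they \emph{are} the theorem. For the first, the existence of a $2$-connected subgraph $B$ of circumference at least $T$, blocks do not inherit the relative degree condition: a non-cut vertex of a block $B$ has degree at least $n/k$ \emph{in} $B$, but $|B|$ can be as large as $n$, so inside $B$ you face exactly the original minimum-degree-$|B|/k$ problem, and the Dirac/Erd\H{o}s--Gallai circumference bound $\min(|B|,2\delta)$ helps only once $|B|$ is already controlled against $\delta$ --- endblocks are only guaranteed $\delta+1\approx n/k+1<T$ vertices, so no block of circumference $T$ is exhibited. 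For the second, the ``no-gaps'' claim, no mechanism is given for altering the length of an even cycle by \emph{exactly} $2$ at every step: P\'{o}sa rotation by itself does not deliver an unbroken interval of even cycle lengths at density $n/k$, and results of this interval type (Bondy's pancyclicity, weak pancyclicity in the style of Brandt) need hypotheses like $\delta>n/2$ or girth control. Closing precisely these two gaps is what consumes the heavy machinery, and the constant $n_0=O(k^{20})$, in \cite{ABHC}. In short, the proposal is a plausible roadmap with the correct extremal picture, but its two pivotal claims are asserted rather than proved.
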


\noindent{}
{\bf Proof of Theorem 7.}
By Lemma 4, $\lambda_1\leq-\frac{1}{2}+\sqrt{2m+\frac{1}{4}}$. By the inial condition of Theorem 7, $\lambda_1>\sqrt{\lfloor n^2/4\rfloor}$. Hence $2m\geq n^2/4+\sqrt{\lfloor\frac{n^2}{4}\rfloor}-1\geq \frac{n^2}{4}$ and $d(G)=\frac{2m}{n}>\frac{n}{4}$. By Lemma 5, there is an induced subgraph $H$ of $G$, such that $\delta(H)\geq \frac{n}{8}$. Let $n'$ be the order of $H$. If $n'\leq\frac{n}{4}$, then by Lemma 6, $H$ contains cycles of lengths $3$ to $\frac{n}{4}$, and there is nothing to do. Now assume that $n'>\frac{n}{4}$, and we have $\delta(H)\geq \frac{n'}{8}$.
By setting $k=8$ in Lemma 7, we know that $H$ contains $C_t$ for every even $4\leq l\leq \lceil \frac{n'}{7}\rceil$. It follows that $H$ contains $C_l$ for every even $4\leq l\leq \lceil \frac{n}{28}\rceil$, and thus $G$ contains the corresponding cycles. The proof is complete. {\hfill$\Box$}

\subsection{An extension of a spectral inequality of Stanley}
The bounds of spectral radius of graphs have received much attentions from spectral graph theorists. The following is Hong's famous bound.

\begin{theorem}[Hong \cite{H}]
Let $G$ be a graph on $m$ edges and $n$ vertices, and without isolated vertices. Then $\lambda_1\leq \sqrt{2m-n+1}$.
\end{theorem}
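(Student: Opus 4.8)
The plan is to derive Hong's bound by the eigenvector/walk-counting technique the paper attributes to Nikiforov, so that it dovetails with the extension proved in the next subsection. First I would use the Perron--Frobenius theorem to fix a nonnegative eigenvector $x=(x_1,\dots,x_n)^{T}$ belonging to $\lambda_1$ and pick a vertex $u$ with $x_u=\max_i x_i$. Because $G$ has no isolated vertices it is nonempty, so $\lambda_1>0$ and $x_u>0$; this is the only use of the hypothesis needed to secure positivity, and connectedness of $G$ is not required.

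The core estimate counts walks of length two. Taking the $u$-th coordinate of $A^2x=\lambda_1^2x$ and using $0\le x_j\le x_u$ for every $j$ gives
\[
\lambda_1^2 x_u=\sum_{j}(A^2)_{uj}x_j\le x_u\sum_{j}(A^2)_{uj}.
\]
Dividing by $x_u$ and recognising $\sum_j(A^2)_{uj}$ as the number of length-two walks issuing from $u$, that is $\sum_{k\in N(u)}d(k)$, I get $\lambda_1^2\le\sum_{k\in N(u)}d(k)$. (In fact Lemma~3 already supplies this bound at the maximizing vertex, so one may bypass the eigenvector computation entirely and quote Lemma~3 once the combinatorial step below is in hand.)

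It remains to prove the combinatorial inequality $\sum_{k\in N(u)}d(k)\le 2m-n+1$, which I claim holds for \emph{every} vertex. Writing $2m=\sum_{v\in V}d(v)$ and subtracting the neighbour sum,
\[
2m-\sum_{k\in N(u)}d(k)=\sum_{v\in V\setminus N(u)}d(v).
\]
The decisive point is that $u\in V\setminus N(u)$ (the graph is loopless), so the right-hand side retains the whole term $d(u)$, while each of the other $n-d(u)-1$ vertices of $V\setminus N(u)$ has degree at least $1$ because there are no isolated vertices. Hence the right-hand side is at least $d(u)+(n-d(u)-1)=n-1$, which rearranges to $\sum_{k\in N(u)}d(k)\le 2m-n+1$ and, combined with the previous paragraph, yields $\lambda_1^2\le 2m-n+1$.

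I expect the walk-counting inequality to be routine; the delicate point is the final paragraph, where recovering the exact constant $-n+1$ (rather than the weaker $-d(u)$ one would get from the naive estimate) depends entirely on charging $u$'s own degree to the sum over $V\setminus N(u)$. The no-isolated-vertices hypothesis must be spent precisely there. As a sanity check I would verify the equality cases $K_n$ and stars, for which $\lambda_1^2=2m-n+1$, to confirm that the chain $\lambda_1^2\le\sum_{k\in N(u)}d(k)\le 2m-n+1$ loses nothing.
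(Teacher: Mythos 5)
Your proof is correct; note, however, that the paper contains no proof of this statement at all --- Theorem 8 is quoted from Hong \cite{H} purely as background for Theorems 9, 10 and 11 --- so the only meaningful comparison is with the paper's surrounding toolkit, with which your argument meshes well. Your eigenvector computation simply reproves the relevant half of Lemma 3 (and, as you note yourself, you may instead quote Lemma 3 directly, since it bounds $\lambda_1^2$ by $\max\{\sum_{u\in N(v)}d(u):v\in V\}$ with no connectivity hypothesis); the genuinely new content is the combinatorial inequality $\sum_{k\in N(u)}d(k)\leq 2m-n+1$, and your proof of it is sound: $2m-\sum_{k\in N(u)}d(k)=\sum_{v\in V\setminus N(u)}d(v)$, the term $d(u)$ survives because $u\notin N(u)$, and the remaining $n-d(u)-1$ vertices of $V\setminus N(u)$ each contribute at least $1$ by the no-isolated-vertices hypothesis, giving the lower bound $d(u)+(n-d(u)-1)=n-1$, which is exactly where that hypothesis is spent. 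One minor quibble: positivity of $x_u$ requires neither $\lambda_1>0$ nor the absence of isolated vertices, since a nonnegative nonzero eigenvector automatically has a positive maximum coordinate (for disconnected $G$, take a Perron vector of a component attaining $\lambda_1$ extended by zeros, and note $u$ then lies in that component). Your equality checks for $K_n$ and the star $K_{1,n-1}$ are also right --- e.g.\ for the star both sides equal $n-1$ --- confirming the chain $\lambda_1^2\leq\sum_{k\in N(u)}d(k)\leq 2m-n+1$ is tight. Altogether this is a legitimate, self-contained derivation of Hong's bound in the same walk-counting spirit as Lemma 3 and the proof of Theorem 11, rather than a reconstruction of any argument actually present in the paper.
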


Hong, Shu and Fang \cite{HSK} generalized Theorem 8 to a connected graph with a given minimum degree. They also characterize the extremal graphs when the equality holds.

\begin{theorem}[Hong, Shu and Fang \cite{HSK}]
Let $G$ be a connected graph on $n$ vertices and $m$ edges and let $\delta$ be the minimum degree of $G$. Then $\lambda_1\leq\frac{\delta-1}{2}+\sqrt{2m-n\delta+\frac{(\delta+1)^2}{4}}$. Equality holds if and only if $G$ is either a regular graph or a bidegreed graph in which each vertex is of degree either $\delta$ or $n-1$.
\end{theorem}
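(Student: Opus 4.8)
The plan is to reduce the stated bound to an equivalent quadratic inequality in the spectral radius and then to establish that inequality by the Perron-eigenvector and walk-counting method, i.e.\ the technique of Nikiforov referred to above. Throughout write $\lambda=\lambda_1$. Since $G$ is connected with minimum degree $\delta$, we have $\lambda\ge \frac{2m}{n}\ge\delta\ge\frac{\delta-1}{2}$, so $\lambda-\frac{\delta-1}{2}\ge 0$ and the radicand is nonnegative because $2m\ge n\delta$. Squaring, the asserted inequality $\lambda\le\frac{\delta-1}{2}+\sqrt{2m-n\delta+\frac{(\delta+1)^2}{4}}$ is therefore equivalent to
\[
\lambda^2-(\delta-1)\lambda\le 2m-(n-1)\delta,
\]
using the identity $\frac{(\delta+1)^2}{4}-\frac{(\delta-1)^2}{4}=\delta$. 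So it suffices to prove this quadratic inequality, and equality in Theorem 9 will correspond exactly to equality here.

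To prove the quadratic inequality I would work with a positive Perron eigenvector $\mathbf{x}$ (which exists as $G$ is connected), normalised so that its largest entry is $x_u=1$ at some vertex $u$. Applying $A^2\mathbf{x}=\lambda^2\mathbf{x}$ at $u$ and using $(A^2)_{uu}=d(u)$, $(A^2)_{uk}=|N(u)\cap N(k)|$ gives
\[
\lambda^2=d(u)+\sum_{k\ne u}|N(u)\cap N(k)|\,x_k .
\]
The key is to estimate this walk sum \emph{without} simply replacing every $x_k$ by $1$: the latter collapses the bound to the estimate of Lemma 3, namely $\lambda^2\le\sum_{k\in N(u)}d(k)$, which is too weak to produce the linear term $(\delta-1)\lambda$. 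Instead I would retain the identity $\sum_{k\in N(u)}x_k=\lambda$ coming from the eigenvalue equation at $u$, which manufactures a term proportional to $\lambda$ (a natural ingredient being $|N(u)\cap N(k)|\le d(k)-1$ for neighbours $k$, since $u\in N(k)\setminus N(u)$), while the minimum-degree hypothesis controls the vertices outside $N[u]$: there are $n-1-d(u)$ of them, each of degree at least $\delta$, so $\sum_{k\notin N[u]}d(k)\ge(n-1-d(u))\delta$, and this deficit is what upgrades the crude degree sum $\sum_{k\in N(u)}d(k)=2m-d(u)-\sum_{k\notin N[u]}d(k)$ into the sharp right-hand side $2m-(n-1)\delta$. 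Solving the resulting quadratic then yields the stated bound.

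The hard part will be the bookkeeping in this combination step. One must split the length-two walks from $u$ into those returning to $u$, those ending inside $N(u)$, and those leaving $N[u]$, and then amalgamate the neighbour estimate, the weighted identity $\sum_{k\in N(u)}x_k=\lambda$, and the minimum-degree lower bound so that the coefficient of $\lambda$ comes out \emph{exactly} $\delta-1$ and \emph{no} slack is introduced at the extremal graphs; a naive term-by-term estimate is already strict on, say, $C_4$. This balancing is precisely where Nikiforov's technique departs from the elementary degree-sum argument of Hong, and it is the step I expect to require the most care.

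Finally, for the equality characterisation I would trace back every inequality used. Equality forces the bound $d(k)\ge\delta$ to be tight for the relevant vertices outside $N[u]$ and forces the Perron weights appearing with nonzero coefficient to be extremal. These conditions split into two families: either $\mathbf{x}$ is constant, in which case $G$ is regular, or $\mathbf{x}$ takes exactly two values with the larger one occurring precisely at the vertices adjacent to everything, which forces $G$ to be a bidegreed graph whose degrees are $\delta$ or $n-1$ (the complete split graphs, for which one checks directly that $\lambda^2-(\delta-1)\lambda=2m-(n-1)\delta=\delta(n-\delta)$). Matching these with the two possibilities in the statement completes the proof.
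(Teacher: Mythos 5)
The paper states this theorem as a quoted result of Hong, Shu and Fang and contains no proof of it; the closest internal model is the walk-counting proof of Theorem 11, which is Nikiforov's technique. Measured against either, your proposal has a genuine gap exactly where you yourself flag it: the ``combination step'' is not carried out, and the ingredients you list do not in fact close it. Following your plan with $x_u=1$ maximal, the expansion of $\lambda^2 x_u$ with $|N(u)\cap N(w)|\le d(w)-1$ for $w\sim u$ and $\le d(w)$ otherwise, together with $\sum_{w\sim u}x_w=\lambda$, gives $\lambda^2+\lambda-d(u)\le\sum_{w\ne u}d(w)x_w$; then the minimum-degree step $d(w)x_w\le d(w)-\delta(1-x_w)$ yields
\begin{align*}
\lambda^2-(\delta-1)\lambda\leq 2m-(n-1)\delta+\delta\sum_{w\notin N[u]}x_w,
\end{align*}
with an uncontrolled positive term coming from the Perron weights of the non-neighbours of $u$, so ``solving the resulting quadratic'' is not available. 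This is not mere bookkeeping: the eigenvector route (Hong--Shu--Fang's actual argument) needs a genuinely different accounting, whereas Nikiforov's walk method avoids the leakage entirely because its one estimate is pointwise and weight-free: $w_2(i)=\sum_{j\sim i}d(j)=2m-d(i)-\sum_{j\notin N[i]}d(j)\le 2m-(n-1)\delta+(\delta-1)d(i)$, whence $w_k=\sum_i w_{k-2}(i)w_2(i)\le(2m-(n-1)\delta)w_{k-2}+(\delta-1)w_{k-1}$ using the \emph{identity} $\sum_i d(i)w_{k-2}(i)=w_{k-1}$; dividing by $w_{k-2}$ and letting $k\to\infty$ (with the same care about $\lambda_n=-\lambda_1$ as in the proof of Theorem 11) gives the quadratic $\lambda^2\le(\delta-1)\lambda+2m-(n-1)\delta$, which is your (correct) reduction of the stated bound.

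The equality part has a second gap. The limiting walk argument cannot certify the ``if and only if'' (the paper itself remarks, after Theorem 10, that the equality analysis is delicate and handled via Hong et al.\ and Zhou--Cho), and your eigenvector-based equality sketch rests on the unexecuted inequality chain, so there is nothing to trace back through. Moreover your parenthetical identification of the extremal family as the complete split graphs is wrong: the bidegreed extremal graphs are the joins $K_s\vee H$ with $H$ a $(\delta-s)$-regular graph on $n-s$ vertices, of which complete split graphs are only the case $H$ edgeless. For instance $K_1\vee C_4$ (the wheel on $5$ vertices) has $n=5$, $m=8$, $\delta=3$ and $\lambda_1=1+\sqrt{5}=\frac{\delta-1}{2}+\sqrt{2m-n\delta+\frac{(\delta+1)^2}{4}}$, attaining equality while not being a complete split graph.
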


Nikiforv \cite{Ni1} proved the above inequality for a general graph independently by a quite different method.

\begin{theorem}[Nikiforov \cite{Ni1}]
Let $G$ be a graph on $n$ vertices and $m$ edges and let $\delta$ be the minimum degree of $G$. Then $\lambda_1\leq\frac{\delta-1}{2}+\sqrt{2m-n\delta+\frac{(\delta+1)^2}{4}}$.
\end{theorem}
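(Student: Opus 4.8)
The plan is to reduce the stated bound to an equivalent polynomial inequality. Because $\lambda_1$ is at least the average degree, $\lambda_1\ge 2m/n\ge\delta$, so $\lambda_1-\frac{\delta-1}{2}\ge0$ and we may square the asserted inequality. Using $\frac{(\delta+1)^2-(\delta-1)^2}{4}=\delta$, the assertion is equivalent to
\begin{align*}
\lambda_1^2-(\delta-1)\lambda_1\le 2m-\delta(n-1).
\end{align*}
The right-hand side equals $\sum_{u\in V(G)}(d(u)-\delta)+\delta$, so after splitting off one vertex it reads $d(r)+\sum_{u\ne r}(d(u)-\delta)$ for any fixed $r$; this is the shape to aim for.

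To prove this I would apply Nikiforov's eigenvector technique. Let $x\ge0$ be a Perron eigenvector of $A(G)$, normalized so that $x_r=\max_v x_v=1$. Applying the eigen-equation $\lambda_1 x_v=\sum_{w\sim v}x_w$ first at $r$ and then at each neighbour of $r$ gives
\begin{align*}
\lambda_1^2=\lambda_1^2 x_r=\sum_{v\sim r}\sum_{w\sim v}x_w=d(r)+\sum_{w\ne r}p_{rw}\,x_w,
\end{align*}
where $p_{rw}$ denotes the number of common neighbours of $r$ and $w$, and the term $d(r)$ collects the length-two walks $r\to v\to r$. Subtracting $(\delta-1)\lambda_1=(\delta-1)\sum_{v\sim r}x_v$ and cancelling $d(r)$ against the right-hand side, the theorem reduces to
\begin{align*}
\sum_{w\ne r}\big(p_{rw}-(\delta-1)[\,w\sim r\,]\big)\,x_w\le\sum_{u\ne r}(d(u)-\delta).
\end{align*}
The only two admissible inputs are the entrywise bound $x_w\le1$ and the minimum-degree hypothesis $d(u)\ge\delta$. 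For a neighbour $w$ of $r$ one has $p_{rw}\le d(w)-1$ (a common neighbour lies in $N(w)\setminus\{r\}$), so its coefficient is at most $d(w)-\delta$; hence every neighbour term is dominated by its own budget $d(w)-\delta$ on the right, even when the coefficient is negative since then $d(w)\ge\delta$ closes the gap.

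The main obstacle is the contribution of the non-neighbours of $r$. For $w\not\sim r$ the coefficient is $p_{rw}\ge0$, and the crude estimate $p_{rw}x_w\le p_{rw}$ need not be bounded by the budget $d(w)-\delta$, because a distance-two vertex may share almost its entire neighbourhood with $r$. Thus the saving $-\delta(n-1)$ cannot be extracted term by term, and the delicate part is a global argument that accounts simultaneously for the edges inside $N(r)$, the edges between $N(r)$ and the rest, and the edges among the non-neighbours, using $d(u)\ge\delta$ to guarantee that the far vertices carry enough degree; equivalently one runs a case analysis over neighbours, distance-two vertices, and far vertices of $r$, redistributing the deficiencies $d(u)-\delta$. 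I expect this redistribution---where equality should force the extremal regular or $\{\delta,n-1\}$-bidegreed graphs---to be the step requiring the most care. It is precisely the place where Nikiforov's eigenvector organization streamlines the Hong--Shu--Fang case analysis, and it is the technique the paper then reuses to extend Stanley's inequality.
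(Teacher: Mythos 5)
Your algebraic reduction is sound: since $\lambda_1\geq 2m/n\geq\delta>\frac{\delta-1}{2}$, squaring is legitimate, and the bound is indeed equivalent to $\lambda_1^2-(\delta-1)\lambda_1\leq 2m-\delta(n-1)=d(r)+\sum_{u\neq r}(d(u)-\delta)$; likewise the identity $\lambda_1^2=d(r)+\sum_{w\neq r}p_{rw}x_w$ and the disposal of the neighbour terms via $p_{rw}\leq d(w)-1$ are correct. But the step you defer --- bounding the contribution of the non-neighbours of $r$ --- is the entire content of the theorem, and you supply no mechanism for it, only the (accurate) diagnosis that the termwise estimate $p_{rw}x_w\leq p_{rw}$ fails because a distance-two vertex can have $p_{rw}$ close to $d(w)$, far above its budget $d(w)-\delta$. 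A ``global redistribution over distance classes that I expect to require the most care'' is a research plan, not a proof; as written the argument has a genuine gap, and filling it along this single-root eigenvector route essentially reproduces the original Hong--Shu--Fang case analysis, which is the hard way.

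The route the paper attributes to Nikiforov (and itself replays in the proof of Theorem 11) sidesteps your obstacle entirely by never privileging one root vertex. For \emph{every} vertex $v$ one has
\begin{align*}
2m=d(v)+\sum_{u\in N(v)}d(u)+\sum_{u\notin N[v]}d(u)\geq d(v)+\sum_{u\in N(v)}d(u)+\delta(n-1-d(v)),
\end{align*}
hence $w_2(v)=\sum_{u\in N(v)}d(u)\leq 2m-\delta(n-1)+(\delta-1)d(v)$. Feeding this into the walk decomposition $w_k=\sum_{v}w_{k-2}(v)\,w_2(v)$ and using $\sum_v w_{k-2}(v)d(v)=w_{k-1}$ gives
\begin{align*}
w_k\leq\bigl(2m-\delta(n-1)\bigr)w_{k-2}+(\delta-1)w_{k-1};
\end{align*}
dividing by $w_{k-2}$ and letting $k\to\infty$ (restricting to odd $k$ in the degenerate case $\lambda_n=-\lambda_1$, exactly as handled in the paper's proof of Theorem 11 via $w_k=\sum_i c_i\lambda_i^k$ with $c_i\geq0$) yields $\lambda_1^2-(\delta-1)\lambda_1\leq 2m-\delta(n-1)$, which is precisely your target inequality. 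The point you missed is that the saving $\delta(n-1-d(v))$ is harvested at every vertex simultaneously, weighted by the walk counts $w_{k-2}(v)$, so no delicate reallocation among neighbours, distance-two vertices, and far vertices is ever needed; the whole proof is three lines once the walk recursion is written down.
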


Nikiforov's proof technique is based on the inequality of the number of walks of graphs. Furthermore, Nikiforov found that there are more general graph classes when the equality holds in Theorem 10, as shown in \cite{Ni1,ZC}. On the other hand, Hong et al.'s result can be used to characterize all the extremal graphs when the equality holds in Theorem 10 (for general graphs), as shown by Zhou and Cho \cite{ZC} .

By applying Nikiforov's technique (See the proof of Theorem 4.1 in \cite{Ni1}), we extend a spectral inequality of Stanley (See Lemma 4 in Subsec. 2.2).
\begin{theorem}
Let $G$ be a graph with the vertex set $V$, where $|V|=n$. Then
\begin{align}
\lambda_1\leq \frac{1}{2}(-1+\sqrt{1+4\max\{\sum_{u\in N[v]}d(u):v\in V}\}).
\end{align}
\end{theorem}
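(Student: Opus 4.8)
The plan is to first rewrite the target bound in polynomial form. Setting $M=\max\{\sum_{u\in N[v]}d(u):v\in V\}$, the claimed inequality $\lambda_1\le\frac{1}{2}(-1+\sqrt{1+4M})$ is equivalent, after squaring $2\lambda_1+1\le\sqrt{1+4M}$ (both sides nonnegative), to the statement $\lambda_1^2+\lambda_1\le M$. This is the inequality I would actually prove. It refines the bound of Lemma 3, which in the present notation reads $\lambda_1^2\le\max\{\sum_{u\in N(v)}d(u):v\in V\}$: passing from $N(v)$ to $N[v]$ contributes the extra summand $d(v)$, and this is exactly what will absorb the additional linear term $\lambda_1$ on the left. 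As a sanity check, $\sum_{u\in N[v]}d(u)\le\sum_{u\in V}d(u)=2m$, so $M\le 2m$ and the resulting bound is never weaker than Stanley's $\lambda_1\le-\frac12+\sqrt{2m+\frac14}$ of Lemma 4, which is the sense in which it extends Stanley's inequality.

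To establish $\lambda_1^2+\lambda_1\le M$ I would apply Nikiforov's eigenvector/walk bookkeeping to the matrix $B=A^2+A$. Since $A$ is a nonnegative symmetric matrix, Perron--Frobenius guarantees a nonnegative eigenvector $\mathbf{x}=(x_1,\dots,x_n)^{\top}$ with $A\mathbf{x}=\lambda_1\mathbf{x}$; here $\lambda_1=\rho(A)$ is the largest eigenvalue, so no connectivity hypothesis is needed. Choose a vertex $v$ with $x_v=\max_i x_i$; as $\mathbf{x}\ge 0$ and $\mathbf{x}\ne 0$ we have $x_v>0$. On the one hand $(B\mathbf{x})_v=(A^2\mathbf{x})_v+(A\mathbf{x})_v=(\lambda_1^2+\lambda_1)x_v$. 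On the other hand, every entry of $B$ is nonnegative and $x_u\le x_v$ for all $u$, so
\begin{align*}
(B\mathbf{x})_v=\sum_{u}B_{vu}x_u\le x_v\sum_{u}B_{vu}.
\end{align*}
Dividing by $x_v>0$ leaves $\lambda_1^2+\lambda_1\le\sum_u B_{vu}$.

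The remaining step is to identify the row sum $\sum_u B_{vu}$. Here $\sum_u A_{vu}=d(v)$, while $\sum_{u}(A^2)_{vu}=\sum_{w\sim v}\sum_{u}A_{wu}=\sum_{w\in N(v)}d(w)$, which is precisely the number of walks of length two starting at $v$ (this is the walk-counting identity underlying Nikiforov's technique). Adding the two gives $\sum_u B_{vu}=d(v)+\sum_{w\in N(v)}d(w)=\sum_{u\in N[v]}d(u)\le M$. Hence $\lambda_1^2+\lambda_1\le M$, and since $\lambda_1\ge 0$ for any graph, solving the increasing quadratic $t\mapsto t^2+t$ yields $\lambda_1\le\frac{1}{2}(-1+\sqrt{1+4M})$, as desired. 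I do not expect a serious obstacle here: the only points needing care are the existence of a nonnegative $\lambda_1$-eigenvector for a possibly disconnected or bipartite $G$ (supplied by Perron--Frobenius for nonnegative matrices, not the irreducible version, so that $\lambda_1=\rho(A)$ and $x_v>0$), and the correct evaluation of the $A^2$ row sum as $\sum_{w\in N(v)}d(w)$.
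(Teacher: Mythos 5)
Your proof is correct, but it takes a genuinely different route from the paper's. The paper follows Nikiforov's walk-counting technique literally: it establishes the inequality $w_k+w_{k-1}\leq M\,w_{k-2}$ for the numbers $w_k$ of walks of length $k$ (with $M=\max\{\sum_{u\in N[v]}d(u):v\in V\}$), writes $w_k=\sum_i c_i\lambda_i^k$ with $c_i\geq 0$, and lets $k\to\infty$ to extract $\lambda_1^2+\lambda_1\leq M$. That limiting step forces a delicate case analysis when $\lambda_n=-\lambda_1$, since then $w_{k-1}/w_{k-2}$ need not converge; the paper must show that the coefficient sums $a$ and $b$ at $\pm\lambda_1$ satisfy $a>b$ and then pass to odd subscripts $2k+1$. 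Your argument reaches the same quadratic inequality in one step, by evaluating $B=A^2+A$ against a nonnegative $\lambda_1$-eigenvector at a maximal coordinate $v$ and computing the row sum $\sum_u B_{vu}=d(v)+\sum_{w\in N(v)}d(w)=\sum_{u\in N[v]}d(u)\leq M$, which is correct since the graph is simple (so $v\notin N(v)$). This avoids limits and the bipartite case entirely; the one point you rightly flag, the existence of a nonnegative eigenvector for $\lambda_1=\rho(A)$ without irreducibility, is standard (general Perron--Frobenius, or observe that if $A\mathbf{x}=\lambda_1\mathbf{x}$ with $\|\mathbf{x}\|=1$ then $|\mathbf{x}|$ also maximizes the Rayleigh quotient and hence lies in the $\lambda_1$-eigenspace). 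Your reduction of the stated bound to $\lambda_1^2+\lambda_1\leq M$ via $\lambda_1\geq 0$ is also the right normalization, and your remark that $M\leq 2m$ recovers Stanley's bound matches the paper's Remark 1. What the paper's longer route buys is fidelity to Nikiforov's method (the stated purpose of that section) and a template adaptable to other walk-based bounds; your route buys brevity and freedom from convergence issues, and in fact yields the slightly stronger pointwise statement $\lambda_1^2+\lambda_1\leq\sum_{u\in N[v]}d(u)$ at the particular vertex $v$ carrying the maximal coordinate.
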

\begin{proof}
The technique used here is originally introduced by Nikiforov \cite{Ni1}.
Let $w_k$ be the number of walks of length $k$ in $G$. Let $w_k(i)$ be the number of walks of length $k$ starting at the vertex $v_i$.
Note that the number of walks of length $k$ starting at $i$ and ending at $i$ are the same. Then
\begin{align*}
w_k&=\sum_{v_i\in V} w_{k-2}(i)w_2(i)\\
&=\sum_{v_i\in V} w_{k-2}(i)\sum_{j\in N(v_i)}d(v_j)\\
&=\sum_{v_i\in V} w_{k-2}(i)(\sum_{j\in N[v_i]}d(v_j)-d(v_i))
\end{align*}
\begin{align*}
&\leq\sum_{v_i\in V} w_{k-2}(i)\max\{\sum_{u\in N[v]}d(u):v\in V\}-w_{k-1}\\
&=\max\{\sum_{u\in N[v]}d(u):v\in V\}\cdot w_{k-2}-w_{k-1}.
\end{align*}
This implies that
\begin{align}
\frac{w_k}{w_{k-2}}+\frac{w_{k-1}}{w_{k-2}}-\max\{\sum_{u\in N[v]}d(u):v\in V\}\leq 0.
\end{align}
Let $\lambda_1\geq\cdots \geq\lambda_n$ be all the eigenvalues of $G$. Recall that the expression of $w_k$ is given by
$$
w_k=\sum_{i=1}^{n}c_i\lambda_i^k,
$$
where $c_i$'s are nonnegative real numbers (see \cite[p. 44, Theorem 1.10]{CDS}).
If $|\lambda_n|<\lambda_1$, then the modulus of any negative eigenvalue of $G$ is less than $\lambda_1$.
By simple computation, we have $\lim\limits_{k\longrightarrow\infty}\frac{w_k}{w_{k-2}}=\lambda_1^2$ and $\lim\limits_{k\longrightarrow\infty}\frac{w_{k-1}}{w_{k-2}}=\lambda_1$. Let $k$ tend infinitely in inequality (7), we have the inequality
\begin{align}
{\lambda_1}^2+\lambda_1-\max\{\sum_{u\in N[v]}d(u):v\in V\}\leq 0.
\end{align}
Now assume that $\lambda_n=-\lambda_1$. W.l.o.g., assume that there are some integers $r,s$ such that
$\lambda_1=\ldots=\lambda_r\geq\lambda_{r+1}\geq\cdots \lambda_{s-1}\geq\lambda_s\cdots=\lambda_n=-\lambda_1$. Set
$a=\sum_{i=1}^r c_i$ and $b=\sum_{i=s}^n c_i$. Now we show that $a>b$. (Compared with the clue given in the proof of Theorem 4.1 in \cite{Ni1}, we will include the reason why $a\neq b$ here.) If $a<b$, then noting that $\lim\limits_{k\longrightarrow\infty}\frac{w_{2k+1}}{{\lambda_1}^{2k+1}}=a-b<0$, we know that $w_{2k+1}$ will be negative if $k$ is sufficiently large, a contradiction. If $a=b$, then
$w_{2k}=2a{\lambda_1}^{2k}+\sum_{i=r+1}^{s-1}c_i{\lambda_i}^{2k}$ and $w_{2k+1}=\sum_{i=r+1}^{s-1}c_i{\lambda_i}^{2k+1}$. Thus we have $\lim\limits_{k\longrightarrow\infty}\frac{w_{2k+1}}{w_{2k}}=0$, a contradiction to the easy observation that $w_{2k+1}\geq w_{2k}$. Hence we deduce that $a>b$. It is easy to check that $\lim\limits_{k\longrightarrow\infty}\frac{w_{k}}{w_{k-2}}={\lambda_1}^2$, and $\lim\limits_{k\longrightarrow\infty}\frac{w_{2k}}{w_{2k-1}}=\frac{a+b}{a-b}\lambda_1\geq\lambda_1$. Replacing the subscript $k$ in inequality (7) by $2k+1$ and letting $k$ tend infinitely, we obtain the inequality (8).
Solving the equation, we obtain the inequality (6). The proof is complete.
\end{proof}

\begin{remark}
Since $\max\{\sum_{u\in N[v]}d(u):v\in V\}\leq 2m$, Theorem 11 can imply Stanley's spectral inequality.
\end{remark}

\noindent{}
{\bf Acknowledgements.} The author is particularly grateful to the anonymous referee for his/her careful reading the manuscript
and many invaluable suggestions.

\end{document}